\newcommand{\K}{\mathbb{K}}
\newcommand{\Z}{\mathbb{Z}}
\newcommand{\R}{\mathbb{R}}
\newcommand{\id}{\mathrm{id}}
\newcommand{\Ker}{\mathrm{Ker}}
\newcommand{\ov}[1]{\overline{#1}}
\newtheorem{theorem}{Theorem} 
\newtheorem{lemma}[theorem]{Lemma} 
\newtheorem{proposition}[theorem]{Proposition}
\newtheorem{exa}[theorem]{Example}
\newtheorem{remark}[theorem]{Remark}
\title{Generalized Kac-Paljutkin  algebras}
\author{Christian Lomp}
\dedicatory{This paper is dedicated to my father.}
\address{Department of Mathematics, University of Porto, 4169-007 Porto, Portugal}
\email{{clomp@fc.up.pt}}
\keywords{Kac-Paljutkin, semisimple Hopf algebras}
\subjclass{16T05, 16S35}
\begin{document}

\begin{abstract}
In this note, we construct a family of non-commutative, non-cocommutative  semisimple Hopf algebras \( H_{n,m} \) of dimension \( n^m m! \) over a field of characteristic zero containing a primitive \( n \)th root of unity, where \( n, m \geq 2 \) are integers. The well-known eight-dimensional Kac--Paljutkin algebra arises as the special case \( H_{2,2} \), while the Hopf algebras previously constructed by Pansera correspond to the instances \( H_{n,2} \). Each algebra \( H_{n,m} \) is defined as an extension of the group algebra \( \mathbb{K} \Sigma_m \) of the symmetric group by the \( m \)-fold tensor product \( R = \mathbb{K} \mathbb{Z}_n^{\otimes m} \), where \( \mathbb{Z}_n \) denotes the cyclic group of order \( n \). This extension admits a realization as a crossed product:
\(H_{n,m} = \mathbb{K} \mathbb{Z}_n^{\otimes m} \#_\gamma \Sigma_m.\)
In the final section, we construct a family of irreducible \( m \)-dimensional representations of \( H_{n,m} \) that are inner faithful as \( R \)-modules and exhibit a nontrivial inner-faithful action of a subalgebra of \( H_{n,m} \) on a quantum polynomial algebra.

\end{abstract}

\maketitle

\section{Introduction}
Let $\K$ be a field of characteristic zero. 
In \cite{KacPaljutkin} (see also \cite{Masuoka}), Kac and Paljutkin introduced an eight dimensional semisimple Hopf algebra $H_8$ given by generators $x,y,z$ subject to the relations 
\begin{equation}\label{eq:1} zx=yz, \quad zy=xz, \quad xy=yx, \quad x^2=y^2=1, \quad z^2=\frac{1}{2}\left(1+x+y-xy\right),\end{equation}
such that $x$ and $y$ are group like and 
\begin{equation}
\Delta(z)=\frac{1}{2}\left(1\otimes 1 + x\otimes 1 + 1\otimes y - x\otimes y\right)(z\otimes z).\end{equation}
The antipode of $H_8$ is the identity map. The algebra $H_8$ can be seen as a quotient of a skew polynomial ring $R[z;\sigma]/I$, where the ring of coefficients is the group ring $R=\K[\Z_2\times \Z_2]$, with an automorphism $\sigma$ that interchanges the generators $x$ and $y$ of $\Z_2\times \Z_2$ and $I$ is the ideal generated by the relation of $z^2$ in equation (\ref{eq:1}). 
In \cite{PanseraPhD, Pansera}, Pansera introduced  a $2n^2$-dimensional semisimple Hopf algebra $H_{2n^2}$ as $R[z;\sigma]/I$, where $R=\K[\Z_n\times \Z_n]$, for a given integer $n\geq 2$. The comultiplication of $z$ is given by 
\begin{equation}
\Delta(z) = \frac{1}{n} \left(\sum_{i,j=0}^{n-1} q^{-ij} x^i\otimes y^j\right)(z\otimes z),
\end{equation}
where $q$ is a primitive $n$th root of unity in $\K$. The group algebra $R$ is a Hopf subalgebra of $H_{2n^2}$ and the antipode $S$ of $H_{2n^2}$ is extended from $R$ by setting $S(z)=z$. Motivated by Etingof-Walton's Theorem, saying that if a semisimple Hopf algebra acts on a commutative domain then its action factors through a group algebra, Pansera showed in \cite{Pansera}, that $H_{2n^2}$ admits an action on a quantum plane $A$ that does not factor through a group algebra, i.e. $H_{2n^2}$ acts inner-faithfully on $A$.

The purpose of this note is to extend Pansera's construction and to provide further examples of non-trivial semisimple  Hopf algebras acting inner-faithfully on non-commutative domains. The basic observation is that, in the construction of $H_{2n^2}$, the action of the automorphism $\sigma$ of order $2$, can be regarded as the action of the symmetric group $\Sigma_2$ on the tensor product $\K[\Z_n] \otimes \K[\Z_n] = \K[\Z_n\times \Z_n]$. As a generalization we consider $\Sigma_m$ acting on the $m$-fold tensor $R=B^{\otimes m}$, for some bialgebra $B$.\footnote{We will denote the symmetric group by $\Sigma_m$ instead of $S_m$ in order to avoid confusion with the antipode $S$.} The standard generators $s_1, \ldots, s_{m-1}$ of $\Sigma_m$, i.e. the transpositions $s_i=(i, i+1)$, generate a free monoid $M=\langle \ov{s}_1, \ldots, \ov{s}_{m-1}\rangle$ that acts on $R$ and allows to consider the skew monoid algebra $R\# M$, which in the case $m=2$ corresponds to the skew polynomial ring $R[z;\sigma]$. The comultiplication of $\ov{s_i}$ can be defined as $\Delta(\ov{s}_i) = J_i (\ov{s}_i \otimes \ov{s}_i )$, for a suitable twist $J_i\in R\otimes R$. Under further assumptions on these twists $J_i$, we define a Hopf structure on the quotient $R\# M / I$, where $I$ is the ideal generated by the usual relations of the symmetric group, $\ov{s}_i\ov{s}_{i+1}\ov{s}_i = \ov{s}_{i+1}\ov{s}_i\ov{s}_{i+1}$ and $\ov{s}_i\ov{s}_j=\ov{s}_j\ov{s}_i$ for $|i-j|>1$, but with $\ov{s}_i^2 = t_i := \mu_R(J_i)$, where $\mu_R$ is the multiplication of $R$. The obtained Hopf algebra $H$ is an extension of  $\K\Sigma_m$ by $R$ and can be  considered a crossed product $R\#_{\gamma} \Sigma_m$ for a suitable $2$-cocycle $\gamma:\Sigma_m\times \Sigma_m \to R^\times$. For $B=\K[\Z_n]$ and $\K$ containing a primitive $n$th root of unity, we provide twists $J_i$ that satisfy all our assumptions and yield a family of semisimple Hopf algebras $H_{n,m}=\K[\Z_n]^{\otimes m}\#_{\gamma} \Sigma_m$ of dimension $n^m m!$. The original Kac-Paljutkin Hopf algebra appears as $H_{2,2}$, while Pansera's algebras appear as $H_{n,2}$.

In the last section we examine actions of $H_{n,m}$ on a quantum polynomial algebra $A$ in $m$ generators $u_1, \ldots, u_m$. We will show that there exists a  $mn^m$-dimensional semisimple Hopf subalgebra $H$ of $H_{n,m}$  that acts inner-faithfully on  $A$ and whose ring of invariants $A^H$ is the subring of cyclic polynomials in $u_1^n, \ldots, u_m^n$, in case $n$ is even.

\section{Constructing Hopf algebras via twists}
An invertible element $J\in B\otimes B$ of a bialgebra $B$ is called a \emph{twist for $B$} if 
\begin{equation}\label{eq:twist}
(\Delta_B \otimes \id_B)(J)(J\otimes 1_B) = (\id_B \otimes \Delta_B)(J)(1_B\otimes J)
\end{equation}
and
$(\epsilon \otimes \id )(J)=1=(\id\otimes\epsilon)(J)$ hold.  
Writing symbolically $J=J^{(1)} \otimes J^{(2)}$, equation (\ref{eq:twist}) means
\begin{equation}\label{eq:twist2}
J_1^{(1)} j^{(1)} \otimes J_2^{(2)} j^{(2)} \otimes J^{(2)} 
= 
J^{(1)} \otimes J_1^{(2)} j^{(1)} \otimes J_2^{(2)} j^{(2)},
\end{equation}
where $j^{(1)} \otimes j^{(2)}$ is another copy of $J$ and where $\Delta_B(J^{(i)})=J_1^{(i)}\otimes J_2^{(i)}$ denotes the comultiplication of one of the legs of $J^{(i)}$ of $J$.  Clearly $1_B\otimes 1_B$ is a twist for $B$. For $B=\K\Z_n$, with generator $x$, the element $J=\frac{1}{n} \sum_{i,j=0}^{n-1} q^{-ij} x^i \otimes x^j$ is a twist for $B$. We will use twists  to deform the comultiplication of a group-like element of a bialgebra as they appear naturally in our construction. Suppose $R\subseteq S$ is an extension of bialgebras with $x\in S\setminus R$ an element such that $Rx$ is a free $R$-module. Suppose further that $J\in R\otimes R$ is an invertible element, such that $\Delta_S(x)=J(x\otimes x)$ and $\epsilon_S(x)=1$. Then the coassociativity for $\Delta_S(x)$ and the freeness of $Rx$ imply  equation (\ref{eq:twist}), while the counity will imply $(\epsilon_R \otimes \id_R)(J)=(\id_R \otimes \epsilon_R)(J)=1$. Hence $J$ is a twist for $R$. Since our aim is to extend the bialgebra structure of some algebra $R=B^{\otimes m}$ to a skew monoid algebra $R\# M$, where $M$ is generated by some elements $\ov{s}_1, \ldots, \ov{s}_k$, such that $\Delta(\ov{s}_i) = J_i (\ov{s}_i \otimes \ov{s}_i)$ holds for some $J_i\in R\otimes R$,  we will need that $J_i$ is a twist for $R$.

For two  bialgebra-twist pairs  $(B,J)$ and $(B',J')$, the element $(\id_B \otimes \tau_{B,B'} \otimes \id_{B'})(J\otimes J')$ is a twist for $R=B\otimes B'$, where $\tau_{B,B'}:B\otimes B' \to B'\otimes B$ denotes the usual flip map.
Hence, given a twist $J=J^{(1)}\otimes J^{(2)}$ for $B$, we could consider 
$$J'  = \left(J^{(1)}\otimes 1_B \right)\otimes \left(J^{(2)} \otimes 1_B\right) = (\id_B \otimes \tau_{B,B} \otimes \id_B)(J \otimes 1_{B^{\otimes 2}}),$$ which is a twist for $B\otimes B$. However, we would like to embed $J$ differently into $(B\otimes B) \otimes (B\otimes B)$. For this, let $m\geq 2$ and denote by $e_i^m: B \to B^{\otimes m}$, the embedding of  $B$ into the $i$th tensorand of $B^{\otimes m}$. We would like that $(e_i^m\otimes e_j^m)(J) \in B^{\otimes m} \otimes B^{\otimes m}$ is a twist for $B^{\otimes m}$, for any $1\leq i<j\leq m$. For $m=2$, we need in particular that 
$ (e_1^2\otimes e_2^2)(J)$ is a twist for $B\otimes B$. For $J=J^{(1)} \otimes J^{(2)}$, this condition is equivalent to the condition that 
 $$J'  = (e_1^2\otimes e_2^2)(J) = \left(J^{(1)}\otimes 1_B\right) \otimes \left(1_B\otimes J^{(2)}\right)$$  satisfies equation (\ref{eq:twist}) for the ring $B\otimes B$, which boils down to
\begin{equation}\label{eq:strongtwist}
J_1^{(1)}j^{(1)}  \otimes J_2^{(1)} \otimes j^{(2)} \otimes J^{(2)}
=
J^{(1)}   \otimes j^{(1)}\otimes J_1^{(2)} \otimes J_2^{(2)}j^{(2)},
\end{equation}
where $j^{(1)} \otimes j^{(2)}$ is another copy of $J$. Conversely, equation (\ref{eq:strongtwist})  is actually equivalent to
$ (e_1^2\otimes e_2^2)(J)$ being a twist for $B\otimes B$, because the counit condition is  automatically verified. Comparing  (\ref{eq:strongtwist}) and (\ref{eq:twist2}) shows that the difference lies in the middle tensorands. If  $J$ is central in $B\otimes B$ then equation (\ref{eq:strongtwist}) implies (\ref{eq:twist2}). It remains unclear whether the converse holds.

\begin{lemma}\label{lem:twist} Let $J$ be a  twist for $B$. Suppose
\begin{equation}\label{eq:strong_twist} (e_1^2\otimes e_2^2)(J) \mbox{   is a twist for } B \otimes B.\end{equation} Then  $(e_i^m\otimes e_{j}^m)(J)$ is a twist for $B^{\otimes m}$, for any $1\leq i<j\leq m$. 
\end{lemma}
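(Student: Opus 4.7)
The strategy is to promote the $m=2$ hypothesis to arbitrary $m$ by a functoriality argument: any twist is transported along any bialgebra morphism to another twist. For each pair $1 \leq i < j \leq m$, I introduce the ``inflation'' map
\[\iota_{ij}: B\otimes B \longrightarrow B^{\otimes m},\qquad a\otimes b \longmapsto 1_B\otimes\cdots\otimes a\otimes\cdots\otimes b\otimes\cdots\otimes 1_B,\]
with $a$ placed in tensor slot $i$, $b$ in slot $j$, and $1_B$ in every other slot. Because the comultiplication and counit of $B^{\otimes m}$ are the tensor of those of $B$, and since $\Delta_B(1_B) = 1_B\otimes 1_B$ and $\epsilon_B(1_B)=1$, the map $\iota_{ij}$ is a morphism of bialgebras.

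Next I record the functoriality principle: if $\phi: A \to A'$ is any bialgebra homomorphism and $K\in A\otimes A$ is a twist for $A$, then $(\phi\otimes\phi)(K)$ is a twist for $A'$. Applying $\phi^{\otimes 3}$ to both sides of equation (\ref{eq:twist}) for $K$ and using $(\phi\otimes\phi)\circ\Delta_A = \Delta_{A'}\circ\phi$, together with the fact that $\phi$ preserves multiplication, converts the identity for $K$ into the twist identity for $(\phi\otimes\phi)(K)$; the counit normalization transports similarly via $\epsilon_{A'}\circ\phi = \epsilon_A$.

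Specializing $\phi = \iota_{ij}$ and $K = (e_1^2\otimes e_2^2)(J)$, which is a twist for $B\otimes B$ by the hypothesis (\ref{eq:strong_twist}), produces a twist for $B^{\otimes m}$. It then suffices to identify this element with $(e_i^m\otimes e_j^m)(J)$. Writing $K = (J^{(1)}\otimes 1_B)\otimes(1_B\otimes J^{(2)})$, the map $\iota_{ij}$ carries the first leg to the element of $B^{\otimes m}$ with $J^{(1)}$ in slot $i$ and $1_B$ elsewhere, and the second leg to the element with $J^{(2)}$ in slot $j$ and $1_B$ elsewhere; these are precisely $e_i^m(J^{(1)})$ and $e_j^m(J^{(2)})$, so $(\iota_{ij}\otimes\iota_{ij})(K) = (e_i^m\otimes e_j^m)(J)$. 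No real obstacle arises in this argument; the only point to watch is the index bookkeeping, and once the twist condition is recognized as preserved under bialgebra morphisms, the lemma follows at once.
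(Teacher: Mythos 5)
Your proof is correct, and it reaches the conclusion by a cleaner route than the paper. The paper verifies the twist identity for $J'=(e_i^m\otimes e_j^m)(J)$ by direct computation: it expands $(\Delta_R\otimes\id_R)(J')(J'\otimes 1_R)$ and $(\id_R\otimes\Delta_R)(J')(1_R\otimes J')$ and identifies them as the images, under the map $e_i\otimes e_{i,j}\otimes e_j$, of the two sides of the four-fold-tensor form (\ref{eq:strongtwist}) of the hypothesis. You instead isolate the general principle that a bialgebra morphism $\phi$ pushes a twist $K$ forward to a twist $(\phi\otimes\phi)(K)$ (apply $\phi^{\otimes 3}$ to the twist identity, using $(\phi\otimes\phi)\circ\Delta=\Delta\circ\phi$ and multiplicativity), check that the slot-insertion $\iota_{ij}\colon B\otimes B\to B^{\otimes m}$ is such a morphism --- this is exactly the map the paper later denotes $e_{ij}^m$, whose compatibility with comultiplication is recorded there as equation (\ref{eq:eij}) --- and then identify $(\iota_{ij}\otimes\iota_{ij})\bigl((e_1^2\otimes e_2^2)(J)\bigr)=(e_i^m\otimes e_j^m)(J)$, which holds because $\iota_{ij}$ sends $J^{(1)}\otimes 1_B$ to $e_i^m(J^{(1)})$ and $1_B\otimes J^{(2)}$ to $e_j^m(J^{(2)})$. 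The underlying computation is the same, but your packaging is more conceptual and slightly more general: it applies verbatim to any twist of $B\otimes B$ and any bialgebra embedding, not only to elements of the special form $(e_1^2\otimes e_2^2)(J)$. The one point you leave implicit is invertibility of the transported element, which is part of the definition of a twist; it is immediate, since $\phi\otimes\phi$ is an algebra map and carries $K^{-1}$ to an inverse of $(\phi\otimes\phi)(K)$ (the paper's proof is equally silent on this).
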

\begin{proof} Set $R=B^{\otimes m}$.
To ease notation, we will drop the superscript $m$ from $e_i^m$. 
Let  $J'=(e_i\otimes e_{j})(J) \otimes R\otimes R$. Note that $\Delta_R e_i = (e_i\otimes e_i) \Delta_B$ and $1_R = e_i(1_B)$. Then
\begin{eqnarray*}
( \Delta_R \otimes id_R)(J')(J' \otimes 1_R)
&=&
 ( \Delta_R e_i \otimes e_j) (J) (e_i\otimes e_j \otimes e_j)(J\otimes 1_B)\\
&=&
(e_i\otimes e_i \otimes e_j) ( \Delta_B\otimes \id_B)(J) (e_i\otimes e_j \otimes e_j )(J\otimes 1_B)\\
&=&
e_i(J^{(1)}_1j^{(1)}) \otimes e_i(J^{(1)}_2)e_j(j^{(2)}) \otimes e_j(J^{(2)})\\
&=&
(e_i\otimes e_{i,j}\otimes e_j)\left(J^{(1)}_1j^{(1)} \otimes J^{(1)}_2 \otimes j^{(2)} \otimes J^{(2)}\right),
\end{eqnarray*}
where $j^{(1)}\otimes j^{(2)}$ is another copy of $J$ and 
$e_{i,j}: B^{\otimes 2} \to R$ denotes the embedding of $B\otimes B$ into the $i$th and $j$th tensorands of $R$.
Similarly one shows $( \id_R \otimes \Delta_R )(J')(1_R\otimes J' ) = (e_i\otimes e_{i,j}\otimes e_j)\left(J^{(1)}   \otimes j^{(1)}\otimes J_1^{(2)} \otimes J_2^{(2)}j^{(2)}\right).$
Since $J$ satisfies equation (\ref{eq:strongtwist}), $J^{(1)}_1j^{(1)} \otimes J^{(1)}_2 \otimes j^{(2)} \otimes J^{(2)} = 
 j^{(1)}\otimes J_1^{(2)} \otimes J_2^{(2)}j^{(2)}$ and therefore 
$( \Delta_R \otimes id_R)(J')(J' \otimes 1_R) = ( \id_R \otimes \Delta_R )(J')(1_R\otimes J' ).$
\end{proof}

The main idea of this note is to use the action of the symmetric group $\Sigma_m$ on the tensor product $R=B^{\otimes m}$ and to extend the bialgebra structure of $R$ to that of a suitable quotient of a skew monoid algebra. The comultiplication of the newly added generators however shall be twisted by some twists of $R$. A pair $(\sigma, J)$ of an algebra homomorphism $\sigma:R\to R$ and a twist $J$ is called a twisted homomorphism (see \cite{Davydov}) if $\epsilon \sigma = \epsilon$ and $\Delta \sigma (h) = J(\sigma \otimes \sigma)\Delta(h)J^{-1}$, for $h\in R$. We will be mostly  interested in the case where $J$ commutes with the elements of the image of $\Delta$, in which case  $(\sigma, J)$ is a twisted homomorphism if and only if $\sigma$ is a coalgebra homomorphism. Pansera showed in \cite{Pansera}, that if  $R$ is a bialgebra with twisted homomorphism  $(\sigma ,J)$, then  the 
bialgebra structure of $R$ can be extended to the skew polynomial ring $R[x;\sigma]$, such that $\Delta(x)=J(x\otimes x)$ and $\epsilon(x)=1$.  We will extend this result to $R= B^{\otimes m}$ and a twist $J$ for $B$, that satisfies  (\ref{eq:strong_twist} ). Here the skew polynomial ring will be replaced by a skew monoid algebra. 

Each permutation $s \in \Sigma_m$ acts on $R=B^{\otimes m}$ by $\sigma_{s}: R\to R$, with $\sigma_{s} (a_1\otimes \cdots \otimes a_n) = a_{s(1)} \otimes \cdots \otimes a_{s(n)}$.  Note that $\sigma_{s}$ is a bialgebra homomorphism of $R$. Thus 
$\Delta \sigma_{s}  = (\sigma_{s} \otimes \sigma_{s})\Delta$ and $\epsilon\sigma_{s} = \epsilon$. 
Given a twist $J$ for $B$, such that $(e_1^2\otimes e_2^2)(J)$   is a twist for $B \otimes B$,
 we define for each transposition $s = (i j)\in \Sigma_m$, with $i<j$, the element $J_s = (e_i^m \otimes e_j^m)(J)\in R\otimes R$, which is a twist for $R$ by Lemma \ref{lem:twist}.

\begin{theorem}\label{thm:bialgebra}
Let $B$ be a bialgebra with central  twist $J$ of $B$ that satisfies (\ref{eq:strong_twist}).
Let $m\geq 2$ and $X\subseteq \Sigma_m$  a set of transpositions. Then the free monoid $M$, generated by $\{\ov{s} : s\in X\}$,  acts on  $R=B^{\otimes m}$ and the bialgebra structure on $R$ extends to a bialgebra structure on the skew monoid algebra $R\# M$, such that $\Delta(\ov{s}) = J_{s} (\ov{s} \otimes \ov{s})$ and $\epsilon(\ov{s})=1$, for all $s\in X$. Moreover, $\pi:R\# M \to \K[M]$ with $\pi(a\# \overline{w}) =  \epsilon(a)\overline{w}$ is a homomorphism of bialgebras.
\end{theorem}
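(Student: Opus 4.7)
The plan is to define the comultiplication and counit on the generating set $R \cup \{\ov{s} : s \in X\}$ of $R\#M$ and then verify compatibility with the algebra relations. First I would observe that the free monoid $M$ acts on $R$ by sending each generator $\ov{s}$ to the bialgebra automorphism $\sigma_s$ and extending by composition, so the skew monoid algebra $R\#M$ is well defined and free as a left $R$-module with basis $M$. I would then set $\Delta|_R = \Delta_R$, $\epsilon|_R = \epsilon_R$, $\Delta(\ov{s}) = J_s(\ov{s}\otimes \ov{s})$, and $\epsilon(\ov{s})=1$ for each generator $\ov{s}$, and extend them multiplicatively to obtain algebra maps on $R\#M$.

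The crux of the argument is checking compatibility of $\Delta$ with the skew-multiplication relations $\ov{s}\cdot r = \sigma_s(r)\cdot \ov{s}$. A direct calculation, using that $\sigma_s$ is a coalgebra map, yields
\[
\Delta(\ov{s})\Delta(r) = J_s(\sigma_s\otimes \sigma_s)(\Delta_R(r))(\ov{s}\otimes \ov{s}) = J_s \Delta_R(\sigma_s(r))(\ov{s}\otimes \ov{s}),
\]
whereas $\Delta(\sigma_s(r))\Delta(\ov{s}) = \Delta_R(\sigma_s(r)) J_s (\ov{s}\otimes \ov{s})$, so equality reduces to the centrality of $J_s$ in $R\otimes R$. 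I would deduce this from the hypothesis that $J$ is central in $B\otimes B$: writing $R\otimes R \cong B^{\otimes 2m}$, the element $J_s = (e_i^m \otimes e_j^m)(J)$ carries $1_B$ in every tensor position except the $i$th (in the first $R$) and the $j$th (in the second), so its commutation with a pure tensor in $R\otimes R$ reduces, via the algebra embeddings $e_i^m$ and $e_j^m$, to the identity $(x\otimes y)J = J(x\otimes y)$ in $B\otimes B$. This is the step where the centrality hypothesis on $J$ is indispensable, and I expect it to be the main obstacle.

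Once $\Delta$ and $\epsilon$ are well defined as algebra homomorphisms, coassociativity and the counit axiom become identities between algebra homomorphisms $R\#M \to (R\#M)^{\otimes n}$, and thus need only be verified on the generating set. On $R$ they hold by assumption. On $\ov{s}$, coassociativity unwinds via multiplicativity of $\Delta$ to
\[
(\Delta_R\otimes \id)(J_s)(J_s\otimes 1_R) = (\id\otimes \Delta_R)(J_s)(1_R\otimes J_s),
\]
which is precisely the twist equation (\ref{eq:twist}) for $J_s$, established by Lemma \ref{lem:twist}. The counit axiom reduces to $(\epsilon_R\otimes \id)(J_s) = 1_R = (\id\otimes \epsilon_R)(J_s)$, which is the counit part of the twist conditions for $J_s$, again supplied by Lemma \ref{lem:twist}. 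This completes the verification that $R\#M$ carries the desired bialgebra structure.
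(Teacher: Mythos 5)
Your proposal is correct and follows essentially the same route as the paper: define $\Delta$ and $\epsilon$ on $R$ and on the generators $\ov{s}$, use centrality of $J_s$ (inherited from centrality of $J$ in $B\otimes B$) together with the fact that $\sigma_s$ is a coalgebra map to check compatibility with the relations $\ov{s}r=\sigma_s(r)\ov{s}$, and reduce coassociativity and the counit axiom to the twist conditions for $J_s$ supplied by Lemma \ref{lem:twist}. The only cosmetic difference is that the paper writes the extension explicitly on the $R$-basis $\{a\ov{w}\}$ rather than invoking multiplicative extension from generators, which amounts to the same argument.
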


\begin{proof} Denote by $\mathrm{Aut}(R)$ the group of algebra automorphisms of $R$.
 Since $M$ is the free monoid on $X$, the map $X\to \mathrm{Aut}(R)$ given by $s\mapsto \sigma_s$ extends to a map of monoids $M\to \mathrm{Aut}(R)$, which allows to consider the skew monoid ring $H=R\# M$, whose basis as a left $R$-module is given by the words in $\{\ov{s}: s\in X\}$. We identify  elements $a\in R$  with elements $a\#1_M$, where $1_M$ is the identity of $M$. In particular, for each $s\in X$ and $a\in R$, we have $ \ov{s}a = \sigma_s(a)\ov{s}.$  To define $\Delta_H: H\to H\otimes H$, we set 
\begin{equation}
\Delta_H(a\ov{w}) := \Delta_R(a) J_{s_1}(\ov{s}_1\otimes \ov{s}_1) \cdots J_{s_k}(\ov{s}_k\otimes \ov{s}_k),
\end{equation}
for $a\in R$ and $w=s_1\ldots s_k$ a word in $X$,
 In particular,  ${\Delta_H}_{\mid_R}=\Delta_R$ and  $\Delta_H(\ov{s}) = J_s (\ov{s}\otimes \ov{s})$, for  $s\in X$.
Since $J$ is central in $B^{\otimes 2}$,  $J_s$ is also central in $R\otimes R$, and since $\sigma_s$ is a bialgebra homomorphism we have
\begin{equation} \Delta_H(\ov{s})\Delta_R(a)
= J_s(\ov{s}\otimes \ov{s})\Delta_R(a) 
= J_s (\sigma_s\otimes \sigma_s)\Delta_R(a) (\ov{s} \otimes \ov{s})
= \Delta_R(\sigma_s(a)) \Delta_H(\ov{s}).\end{equation}
Thus, $\Delta_H$ is a well-defined algebra homomorphism.
The coassociativity follows from the fact that $J_s$ is a twist for $R$, by Lemma \ref{lem:twist}:
\begin{eqnarray*}
(\Delta_H\otimes \id_H)\Delta_H(\ov{s}) 
&=& (\Delta_R\otimes \id_R)\Delta_R(J_s) \left(\Delta_H(\ov{s})\otimes \ov{s}\right)\\
&=& (\Delta_R\otimes \id_R)\Delta_R(J_s) (J_s\otimes 1_R) (\ov{s} \otimes \ov{s}\otimes \ov{s})\\
&=& (\id_R\otimes \Delta_R)\Delta_R(J_s) (1_R\otimes J_s) (\ov{s}\otimes \ov{s})\otimes \ov{s})\\
&=& (\id_R \otimes \Delta_R)\Delta_R(J_s) \left(\ov{s}\otimes \Delta_H(\ov{s})\right)\\
&=&(\id_H\otimes \Delta_H)\Delta_H(\ov{s}) 
\end{eqnarray*}
 Furthermore, set $\epsilon_H(a\ov{w})=\epsilon_R(a)$, for all $a\in R$ and $w\in M$. Since $\epsilon_R\sigma_s=\epsilon_R$, we conclude that $\epsilon_H$ is an algebra homomorphism. For any $a\in R$ and $s\in X$:
$$(\epsilon_H\otimes \id_H)\Delta_H(a\ov{s}) 
=  (\epsilon_H\otimes \id_H) \left(\Delta_R(a) J_s (\ov{s}\otimes \ov{s})\right)
=  a (\epsilon_R\otimes \id_R)(J_s)  \ov{s} = a\ov{s},$$
since $(\epsilon_R\otimes \id_R)(J_s) = (\epsilon_B\otimes id_B)(J)=1$. Similarly, one checks 
$(id_H\otimes \epsilon_H)\Delta_H(a\ov{s}) =1$.
\end{proof}

\begin{remark} Note that $R\# \overline{1} =\{ \gamma \in R\#M \mid (id\otimes \pi)\Delta(\gamma) = \gamma \otimes 1\}.$ Hence $R\#M$ is a bialgebra  extension of $\K[M]$ by $R$.

\end{remark}

\begin{remark}
Let $s\in \Sigma_m$ be an element of order $2$. Then $s$ is a product of disjoint transpositions, say $s=s_1\cdots s_k$. 
Since the elements $ J_{s_1}, \ldots, J_{s_k}$ commute pairwise, the product $J_s := J_{s_1}\cdots J_{s_k}$ is also a twist for $R$ and one could prove also a version of Theorem \ref{thm:bialgebra} for subsets $X\subseteq \Sigma_m$ of permutations of order $2$.
\end{remark}
Denote by $\mu_R$ the multiplication of $R$.

\begin{theorem}\label{thm:HopfStructure}
Let $B$ be a bialgebra with $J$ a central  twist for $B$ that satisfies (\ref{eq:strong_twist}).
Let  $m\geq 2$, $X\subseteq \Sigma_m$  a set of transpositions,  $M$ the free monoid generated by $\{\ov{s} : s\in X\}$ acting on $R=B^{\otimes m}$.
\begin{enumerate}
\item Suppose $J$ satisfies
\begin{equation}\label{eq:superstrong}
\Delta_{B\otimes B}(J) = (e_1^2 \otimes e_2^2)(J)(e_2^2 \otimes e_1^2)(J)(J\otimes J).
\end{equation}
Then  the ideal $I=\langle \ov{s}^2 - t_s \: : \: s\in X\rangle$ is a biideal of $R\# M$, where $t_s = \mu_{R}(J_s)$.
\item  Suppose moreover that $B$ is a Hopf algebra with antipode $S$, such that $(S\otimes S)(J)=J$ and $(S\otimes \id)(J) = J^{-1}= (\id\otimes S)(J).$ Then 
$H=(R\#M)/I$ is a Hopf algebra with $S(\ov{s}+I) = \ov{s}+I$ in $H$, for $s\in X$.
\end{enumerate}
\end{theorem}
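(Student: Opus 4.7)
For part (1), the plan is to verify that $I$ is a biideal by checking the counit and comultiplication conditions on the generators $\ov{s}^2 - t_s$. The counit condition $\epsilon_H(\ov{s}^2 - t_s) = 1 - (\epsilon \otimes \epsilon)(J_s) = 0$ is immediate from the twist axioms. For the comultiplication, I expand
\[
\Delta_H(\ov{s}^2) = J_s(\ov{s}\otimes\ov{s}) J_s (\ov{s}\otimes\ov{s}) = J_s \cdot (\sigma_s \otimes \sigma_s)(J_s) \cdot (\ov{s}^2 \otimes \ov{s}^2)
\]
by pushing $J_s$ past $\ov{s} \otimes \ov{s}$ via the commutation relation. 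Modulo $I \otimes H + H \otimes I$ this becomes $J_s (\sigma_s \otimes \sigma_s)(J_s)(t_s \otimes t_s)$, so the task reduces to the identity $\Delta_R(t_s) = J_s \cdot (\sigma_s \otimes \sigma_s)(J_s) \cdot (t_s \otimes t_s)$ in $R \otimes R$. For this I introduce the auxiliary map $\phi_{ij} \colon B \otimes B \to R$, $a \otimes b \mapsto e_i^m(a)\, e_j^m(b)$, which is a bialgebra homomorphism because the images of $e_i^m$ and $e_j^m$ commute in $R = B^{\otimes m}$ for $i \neq j$. One checks $\phi_{ij}(J) = t_s$, and that $\phi_{ij} \otimes \phi_{ij}$ sends $(e_1^2 \otimes e_2^2)(J)$, $(e_2^2 \otimes e_1^2)(J)$ and $J \otimes J$ to $J_s$, $(\sigma_s \otimes \sigma_s)(J_s)$ and $t_s \otimes t_s$, respectively. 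Applying the algebra homomorphism $\phi_{ij} \otimes \phi_{ij}$ to both sides of hypothesis (\ref{eq:superstrong}) then yields exactly the desired identity.

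For part (2), I first extend $S$ to an anti-algebra homomorphism on $R \# M$ with $S|_R = S_R$ and $S(\ov{s}) = \ov{s}$ for $s \in X$. Consistency with $\ov{s} r = \sigma_s(r) \ov{s}$ reduces to $\sigma_s \circ S_R = S_R \circ \sigma_s$, which holds because both maps act componentwise on $R = B^{\otimes m}$. To descend $S$ to $H$, I verify $S(t_s) = t_s$: using the commutativity of $e_i^m(B)$ with $e_j^m(B)$ and the hypothesis $(S \otimes S)(J) = J$ one computes
\[
S(t_s) = S\bigl(e_i^m(J^{(1)}) \, e_j^m(J^{(2)})\bigr) = e_i^m(SJ^{(1)}) \, e_j^m(SJ^{(2)}) = \phi_{ij}((S \otimes S)(J)) = \phi_{ij}(J) = t_s,
\]
so $S(\ov{s}^2 - t_s) = \ov{s}^2 - t_s \in I$ and $S$ descends to an anti-algebra endomorphism of $H$.

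The \emph{main obstacle} is verifying the antipode axioms on the generator $\ov{s}$. A direct expansion using $S(\ov{s}) = \ov{s}$, the commutation $\ov{s} r = \sigma_s(r) \ov{s}$, and $\ov{s}^2 \equiv t_s \pmod{I}$ gives
\[
\mu \circ (\id_H \otimes S) \circ \Delta_H(\ov{s}) \;\equiv\; \phi_{ij}\bigl((\id \otimes S)(J)\bigr) \cdot t_s \;=\; \phi_{ij}(J^{-1}) \cdot t_s \;=\; t_s^{-1} \cdot t_s \;=\; 1 \pmod{I},
\]
using the hypothesis $(\id \otimes S)(J) = J^{-1}$ and the fact that $\phi_{ij}$ is an algebra homomorphism, so $\phi_{ij}(J^{-1}) = \phi_{ij}(J)^{-1} = t_s^{-1}$. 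The companion identity $\mu \circ (S \otimes \id_H) \circ \Delta_H(\ov{s}) = 1$ follows by the same kind of computation based on $(S \otimes \id)(J) = J^{-1}$, together with the $\sigma_s$-invariance of $t_s$ obtained from the symmetry of $J$ implicit in the hypotheses on $S$. Combined with the Hopf axioms inherited from $R = B^{\otimes m}$, this completes the verification and shows that $H$ is a Hopf algebra with $S(\ov{s}+I) = \ov{s}+I$.
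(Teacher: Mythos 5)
Your proof is correct and follows essentially the same route as the paper: your $\phi_{ij}$ is exactly the paper's map $e_{ij}^m=\mu_R\circ(e_i^m\otimes e_j^m)$, the coideal property is obtained in both cases by applying $\phi_{ij}\otimes\phi_{ij}$ to equation (\ref{eq:superstrong}), and the antipode verification is the identical computation ending in $\phi_{ij}(J^{-1})t_s=1$. The only (minor) difference is that you explicitly flag that the companion identity $\mu(S\otimes\id)\Delta(\ov{s})=1$ also requires $\sigma_s(t_s)=t_s$, a point the paper hides behind ``similarly''; this invariance amounts to symmetry of $J$ under the flip, which does hold for the cyclic-group twist used in the rest of the paper.
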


\begin{proof}
(1)
Let $s = (i j) \in X$, with $i<j$. 
Note that $e_{ij}^m:=\mu_R (e_i^m\otimes e_j^m):B\otimes B\to R$ is an algebra homomorphism that satisfies 
\begin{equation}\label{eq:eij}
\Delta_R\circ e_{ij}^m = (e_{ij}^m\otimes e_{ij}^m)\circ \Delta_{B\otimes B},
\end{equation} as well as $e_{ij}^me _1^2 = e_i^m$ and $e_{ij}^m e_2^2 = e_j^m$. Set $J_s := (e_i^m \otimes e_j^m)(J)\in R^{\otimes 2}$ and 
$t_s = \mu_R(J_s)=e_{ij}^m (J)\in R$.  Since $J$ is central in $B\otimes B$, so is $t_s$ in $R$. Then equations (\ref{eq:superstrong}) and (\ref{eq:eij}) imply
\begin{eqnarray*} 
	\Delta_{R}(t_s) = \Delta_R(e_{ij}^m(J))
	&=& e_{ij}^m\otimes e_{ij}^m \left(\Delta_{B\otimes B}(J)\right) \\
	&=& e_{ij}^m\otimes e_{ij}^m \left((e_1^2\otimes e_2^2)(J)  (e_2^2\otimes e_1^2)(J)(J\otimes J)\right) \\	
	&=& 
	(e_i^m\otimes e_j^m)(J) (e_j^m\otimes e_i^m)(J) (e_{ij}^m(J) \otimes e_{ij}^m(J)) \\
	&=& J_s (\sigma_s\otimes \sigma_s)(J_s)(t_s\otimes t_s),
\end{eqnarray*}
Hence, $\Delta(\ov{s}^2 - t_s) 
	= J_s(\ov{s}\otimes \ov{s})J_s(\ov{s}\otimes \ov{s}) - \Delta(t_s)
	= J_s(\sigma_s\otimes \sigma_s)(J)\left(\ov{s}^2\otimes \ov{s}^2 - t_s\otimes t_s\right)
	$ is an element of $I\otimes (R\# M) + (R\# M)\otimes I$.		
Since $\epsilon_H(t_s)=\epsilon_R(\mu_R(J)))=1=\epsilon_H(\ov{s}^2)$, we conclude that $I$ is a coideal of $R\# M$.

(2) Set $H=(R\# M)/I$ and assume that $B$ is a Hopf algebra with antipode $S$. Then $R$ is  a Hopf algebra with antipode $S^{\otimes m}$, which we will also denote by $S$. Since  $(S\otimes S)(J)=J$ holds, we  have 
$S(t_s) = S(e_{ij}^m(J))=e_{ij}^m(S\otimes S)(J) = t_s.$
Hence, $S(\ov{s})=\ov{s}$ modulo $I$ is well defined. We also set 
\begin{equation}\label{def:S}
S(a\ov{s}_1\cdots \ov{s}_k +I)=\ov{s}_k\cdots \ov{s}_1 S(a)+I,
\end{equation}
 for all $a\in R$ and $s_1,\ldots, s_k \in X$.  Since $S\sigma_s = \sigma_s S$, definition (\ref{def:S}) leads to a well defined algebra anti-homomorphism $S:H\to H$. Writing $J$ again symbolically as $J=J^{(1)} \otimes J^{(2)}$, we calculate modulo $I$:
\begin{eqnarray*} \mu_H(\id_H \otimes S)\Delta_H (\ov{s}) 
	&=& \mu_H(\id_H \otimes S) J_s (\ov{s}\otimes \ov{s})\\
	&=& e_i^m(J^{(1)})\ov{s} S(\ov{s}) S(e_j^m(J^{(2)}))\\
	&=& e_i^m(J^{(1)})S(e_j^m(J^{(2)}))t_s\\
	&=& e_{ij}^m\left( (id\otimes S)(J)\right) e_{ij}^m(J)
	= e_{ij}^m\left( J^{-1}J\right) = 1
\end{eqnarray*}
Similarly $\mu_H(S \otimes \id_H)\Delta_H (\ov{s}) = 1$ holds modulo $I$, because of $(S\otimes \id)(J) = J^{-1}$.
Thus $S$ defines an antipode for the bialgebra $H$.
\end{proof}

Since our aim is to construct semisimple Hopf algebra quotients of $R\# M$, they must be necessarily finite dimensional over $\K$. Thus we will consider quotients of $R\# M$, by using the usual Coxeter  presentation of $\Sigma_m$ with generators $s_i = (i, i+1)$.

\begin{theorem}[Extension by the symmetric group]\label{thm:HopfAlgebra}  Let  $B$ be a Hopf algebra with antipode $S$  and $J$ a central twist for $B$ that satisfies (\ref{eq:strong_twist}) and (\ref{eq:superstrong}), such that  $(S\otimes S)(J)=J$ and $(\id\otimes S)(J)=J^{-1}=(S\otimes \id)(J)$. 
For any $m\geq 2$, let  $X = \{ s_1, \ldots, s_{m-1}\}$ be the set of  transposition $s_i = (i,  i+1)$ that generates $\Sigma_m$ and let $M$ be the free monoid generated by $\{\ov{s_i} : 1\leq i <m \}$, which acts naturally on  $R = B^{\otimes m}$. Consider the  ideal $I$ of $R\# M$ generated by 
$$ \ov{s}_i^2 - t_{s_i}, \qquad \ov{s}_i\ov{s}_{i+1}\ov{s}_i - \ov{s}_{i+1}\ov{s}_i\ov{s}_{i+1},  \qquad \ov{s}_i\ov{s}_{j} - \ov{s}_j\ov{s}_i,\qquad\forall i,j \mbox{ with } |i-j|>1.$$
Then $H=(R\# M)/I$ is a Hopf algebra of dimension $\dim(B)^{m} m!$ with Hopf subalgebra $R=B^{\otimes m}$ and Hopf quotient $\K \Sigma_m$. Furthermore, 
\begin{enumerate}
\item $\{\ov{w}: w\in \Sigma_m\}$ is an $R$-basis of $H$;
\item there exists a $2$-cocycle $\gamma:\Sigma_m\times \Sigma_m \to Z(R)^\times$ such that $H \simeq R \#_\gamma \Sigma_m$ with multiplication given by 
\begin{equation}
(a\# \ov{w})(b\# \ov{v}) = 
a\sigma_w(b)  \gamma(w,v)\# \ov{wv}.
\end{equation}
for all $a,b \in R$ and $w,v\in \Sigma_m$, where $\gamma(w,v)$ is a product of elements $\sigma_u(t_s)$, for  some $u\in \Sigma_m$ and $s\in X$.
\item $\epsilon(\gamma(w,v))=1$, for all $w,v\in \Sigma_m$.
\item if $B$ is semisimple, then $H$ is semisimple with integral $\int \sum_{w\in \Sigma_m} \ov{w}$, where $\int$ is the integral of $R$.
\end{enumerate}
\end{theorem}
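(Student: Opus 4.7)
The plan is to verify the listed assertions in sequence, after first establishing that $I$ is a Hopf ideal so that $H=(R\#M)/I$ inherits a Hopf structure from $R\#M$. The squaring generators $\ov{s}_i^2-t_{s_i}$ are already handled by Theorem \ref{thm:HopfStructure}. It remains to verify that the braid relations $\ov{s}_i\ov{s}_{i+1}\ov{s}_i-\ov{s}_{i+1}\ov{s}_i\ov{s}_{i+1}$ and commutation relations $\ov{s}_i\ov{s}_j-\ov{s}_j\ov{s}_i$ (for $|i-j|>1$) lie in $\Ker\Delta$ modulo $I\otimes H+H\otimes I$ and are $S$-invariant. The commutation relations are the easy case: $J_{s_i}$ and $J_{s_j}$ sit in disjoint tensor factors of $R^{\otimes 2}$, so they commute, and the two comultiplications coincide literally. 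Anti-multiplicativity of $S$ combined with $S(\ov{s}_i)=\ov{s}_i$ (modulo $I$) sends $\ov{s}_i\ov{s}_{i+1}\ov{s}_i$ to itself and swaps pairs of commuting transpositions, so $S(I)\subseteq I$.

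For assertion (1), the relations of $I$ together with $\ov{s}_i a=\sigma_{s_i}(a)\ov{s}_i$ reduce any element of $R\#M$ modulo $I$ to an $R$-linear combination of $\{\ov{w}:w\in\Sigma_m\}$, where each $w$ is represented by a fixed reduced expression; this gives spanning. For (2) and $R$-linear independence, I would construct the crossed product $R\#_\gamma\Sigma_m$ with cocycle $\gamma$ defined via these reduced expressions: the product $\ov{w}\cdot\ov{v}$ in $H$ can be rewritten, using the Coxeter relations and $\ov{s}_i^2=t_{s_i}$, as $a\sigma_w(b)\gamma(w,v)\ov{wv}$. The cocycle identity for $\gamma$ comes from associativity; its verification reduces to identities of twist products under $\Sigma_m$-conjugation. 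The natural map $H\to R\#_\gamma\Sigma_m$ is then surjective, and matching the spanning bound forces it to be an isomorphism, yielding both $R$-basis and crossed product description. The embedding $R\hookrightarrow H$ as a Hopf subalgebra and the Hopf quotient $H\twoheadrightarrow \K\Sigma_m$ given by $a\#\ov{w}\mapsto\epsilon(a)w$ both follow. Each $\gamma(w,v)$ is by construction a product of conjugates $\sigma_u(t_s)$, which are central (since $t_s=\mu_R(J_s)$ is central as $J$ is central in $B\otimes B$) and have counit $1$ since $\epsilon(t_s)=(\epsilon\otimes\epsilon)(J)=1$, giving (3).

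For (4), assuming $B$ is semisimple, let $\int=\int_B^{\otimes m}$ be the (two-sided) integral of $R$; it is fixed by every $\sigma_w$ since the latter merely permutes tensor factors. Set $\Lambda=\int\sum_{w\in\Sigma_m}\ov{w}$. For $a\in R$, $a\Lambda=(a\int)\sum_w\ov{w}=\epsilon(a)\Lambda$. For a generator $\ov{s}_i$,
\[
\ov{s}_i\Lambda=\sigma_{s_i}(\textstyle\int)\ov{s}_i\sum_w\ov{w}=\int\sum_w\gamma(s_i,w)\ov{s_iw}=\sum_w\epsilon(\gamma(s_i,w))\int\ov{s_iw}=\Lambda,
\]
using $\sigma_{s_i}$-invariance of $\int$, the identity $\int\gamma=\epsilon(\gamma)\int$ (valid since $R$ is unimodular and $\gamma$ is central), and the bijection $w\mapsto s_iw$. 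Nonvanishing of $\Lambda$ follows from the $R$-basis property, and Maschke then gives semisimplicity.

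The main obstacle is braid compatibility with $\Delta$: expansion yields
\[
\Delta(\ov{s}_i\ov{s}_{i+1}\ov{s}_i)=K_{i,i+1,i}\cdot(\ov{s}_i\ov{s}_{i+1}\ov{s}_i\otimes\ov{s}_i\ov{s}_{i+1}\ov{s}_i),
\]
with $K_{i,i+1,i}=J_{s_i}(\sigma_{s_i}\otimes\sigma_{s_i})(J_{s_{i+1}})(\sigma_{s_is_{i+1}}\otimes\sigma_{s_is_{i+1}})(J_{s_i})$, and one must show $K_{i,i+1,i}=K_{i+1,i,i+1}$ in $R^{\otimes 2}$. This is an identity of twist products supported in positions $i,i+1,i+2$, which I expect to follow from the strong twist hypothesis \eqref{eq:strong_twist} together with \eqref{eq:superstrong} applied to the three-fold embedding $B^{\otimes 3}\hookrightarrow R$. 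Once this identity is secured, the remaining steps---spanning, cocycle identity, integral---are standard Hopf-theoretic consequences.
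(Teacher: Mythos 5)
Your outline follows the paper's proof in structure, but the step you yourself single out as ``the main obstacle''---the identity $K_{i,i+1,i}=K_{i+1,i,i+1}$ needed for compatibility of $\Delta$ with the braid relation---is left unproven, and the mechanism you point to is not the right one. This identity is \emph{not} a consequence of (\ref{eq:strong_twist}) or (\ref{eq:superstrong}); those hypotheses govern coassociativity of the $J_s$ and the comultiplication of $t_s$, respectively, and play no role here. What actually makes the identity work is the combinatorics of the leg-permutation action together with the centrality of $J$. Concretely, from $\sigma_{s_i}e_i^m=e_{i+1}^m$, $\sigma_{s_i}e_{i+1}^m=e_i^m$ and $\sigma_{s_i}e_j^m=e_j^m$ for $j\notin\{i,i+1\}$ one computes
\[
(\sigma_{s_i}\otimes\sigma_{s_i})(J_{s_{i+1}})=(e_i^m\otimes e_{i+2}^m)(J)=(\sigma_{s_{i+1}}\otimes\sigma_{s_{i+1}})(J_{s_i}),
\]
\[
(\sigma_{s_i}\sigma_{s_{i+1}}\otimes\sigma_{s_i}\sigma_{s_{i+1}})(J_{s_i})=J_{s_{i+1}},
\qquad
(\sigma_{s_{i+1}}\sigma_{s_i}\otimes\sigma_{s_{i+1}}\sigma_{s_i})(J_{s_{i+1}})=J_{s_i},
\]
so that $K_{i,i+1,i}=J_{s_i}\cdot(e_i^m\otimes e_{i+2}^m)(J)\cdot J_{s_{i+1}}$ while $K_{i+1,i,i+1}=J_{s_{i+1}}\cdot(e_i^m\otimes e_{i+2}^m)(J)\cdot J_{s_i}$. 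These are the same three factors in opposite order, hence equal because $J$ (and therefore every $(e_k^m\otimes e_l^m)(J)$) is central. Without this computation your proof of the bialgebra structure on $H$ is incomplete at its central point, and a reader following your pointer to (\ref{eq:strong_twist}) and (\ref{eq:superstrong}) would be led astray.

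The remainder of your argument is sound and essentially coincides with the paper's. Your treatment of (1)--(3) via reduced expressions and an abstract crossed product $R\#_\gamma\Sigma_m$ is, if anything, slightly more careful about the $R$-linear independence of $\{\ov{w}\}$ than the paper's grading argument. For (4) you deduce semisimplicity from the nonvanishing of $\epsilon(\Lambda)=m!\,\epsilon(\int)$ via Maschke, whereas the paper invokes $S^2=\mathrm{id}$ and Larson--Radford; both are valid in characteristic zero.
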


\begin{proof}	
From Theorem \ref{thm:HopfStructure} we know already, that $\langle \ov{s}_i^2-t_{s_i}: 1\leq i < m\rangle$ is a coideal of $R\# M$.
Furthermore, for $i,j$ such that $|i-j|>1$, the transpositions $s_i$ and $s_j$ are disjoint. Hence $J_{s_i}$ and $\ov{s}_j\otimes \ov{s}_j$, as well as $J_{s_j}$ and $\ov{s}_i\otimes \ov{s}_i$ commute in $R\# M$. Thus
$$\Delta(\ov{s}_i\ov{s}_j - \ov{s}_j\ov{s}_i)
= J_{s_i}J_{s_j}
\left(\ov{s}_i\ov{s}_j\otimes \ov{s}_i\ov{s}_j - \ov{s}_j\ov{s}_i\otimes \ov{s}_j\ov{s}_i\right) \in (R\#M)\otimes I + I \otimes (R\#M).$$
For $i<m-1$, we have ${\sigma_{s_i}}e_i^m = e_{i+1}^m$ and ${\sigma_{s_i}}e_{i+1}^m = e_{i}^m$, while ${\sigma_{s_i}}e_j^m = e_j^m$ if $j\not\in\{i, i+1\}$.
Hence 
\begin{equation}(\sigma_{s_i}\otimes \sigma_{s_i})J_{s_{i+1}}
= (e_{i}^m \otimes e_{i+2}^m)(J)
= (\sigma_{s_{i+1}}\otimes \sigma_{s_{i+1}})J_{s_{i}}
\end{equation}
In particular, 
$(\sigma_{s_i}\sigma_{s_{i+1}}\otimes \sigma_{s_i}\sigma_{s_{i+1}})(J_{s_{i}}) = J_{s_{i+1}}$ and $
(\sigma_{s_{i+1}}\sigma_{s_{i}}\otimes \sigma_{s_{i+1}}\sigma_{s_{i}})(J_{s_{i+1}})	= J_{s_i}$. 
Thus,
\begin{eqnarray}\label{eq:technical1}
	J_{s_i}(\sigma_{s_i}\otimes \sigma_{s_i})(J_{s_{i+1}})(\sigma_{s_i}\sigma_{s_{i+1}}\otimes \sigma_{s_i}\sigma_{s_{i+1}})(J_{s_{i}})
	&=&
	J_{s_i}(\sigma_{s_i}\otimes \sigma_{s_i})(J_{s_{i+1}}) J_{s_{i+1}}\\
\label{eq:technical2}
J_{s_{i+1}}
(\sigma_{s_{i+1}}\otimes \sigma_{s_{i+1}})(J_{s_{i}})
(\sigma_{s_{i+1}}\sigma_{s_{i}}\otimes \sigma_{s_{i+1}}\sigma_{s_{i}})(J_{s_{i+1}})	&=&	
	J_{s_{i+1}}(\sigma_{s_{i}}\otimes \sigma_{s_{i}})(J_{s_{i+1}}) J_{s_i}
\end{eqnarray}
Since $J$ is central,   the values in (\ref{eq:technical1}) and (\ref{eq:technical2}) are equal, call it $\gamma$. Then
$$
	\Delta(\ov{s}_{i}\ov{s}_{i+1}\ov{s}_{i})
	= J_{s_1}(\ov{s}_{i}\otimes \ov{s}_{i}) J_{s_{i+1}} (\ov{s}_{i+1}\otimes \ov{s}_{i+1})J_{s_1}(\ov{s}_{i}\otimes \ov{s}_{i})	
	= \gamma(\ov{s}_{i}\ov{s}_{i+1}\ov{s}_{i}
	\otimes \ov{s}_{i}\ov{s}_{i+1}\ov{s}_{i})
$$
and similarly $		
\Delta(\ov{s}_{i+1}\ov{s}_{i}\ov{s}_{i+1})
= \gamma(\ov{s}_{i+1}\ov{s}_{i}\ov{s}_{i+1}
\otimes \ov{s}_{i+1}\ov{s}_{i}\ov{s}_{i+1}).
$
Hence $\Delta(\ov{s}_{i}\ov{s}_{i+1}\ov{s}_{i}-\ov{s}_{i+1}\ov{s}_{i}\ov{s}_{i+1})$ is an element of $I\otimes (R\#M) + (R\#M) \otimes I$.
Defining $S(\ov{s}_i)=\ov{s}_i$, for all $i$, we obtain that $H=(R\# M)/I$ is a Hopf algebra. 
Note that $R\# M$ is graded by the length of words in $M$. Since elements of the form $\ov{s}_i\ov{s}_{j}-\ov{s}_j\ov{s}_{i}$ and  $\ov{s}_{i}\ov{s}_{i+1}\ov{s}_{i}-\ov{s}_{i+1}\ov{s}_{i}\ov{s}_{i+1}$ are homogeneous of degree $2$ and $3$ respectively, the only possible  $a\in R\cap I$ must be of the form  $a=\sum_{i=1}^{m-1} a_i(\ov{s}_i^2-t_i)$. Since words in $M$ form an $R$-basis of $R\# M$, we must have $a_i =0$, for all $i$, hence $R\cap I=0$ and $R$ embeds into $H=(R\# M)/I$ as Hopf subalgbra of $H$. Actually, $R$ is a normal Hopf subalgebra and  $HR^+=R^+H$ is a  Hopf ideal of $H$, where $R^+ = R\cap \Ker(\epsilon)$. 

(1) Since $s_1,\ldots, s_{n-1}$ generate $\Sigma_m$, we choose for each $w\in \Sigma_m$ a representation 
$w = s_{i_1}\cdots s_{i_{k}}$, with $k$ minimal. We will write $\ov{w}= \ov{s}_{i_1}\cdots \ov{s}_{i_{k}}$, for the corresponding element in $R\# M$. Since the $s_i$ satisfy $s_is_{i+1}s_i = s_{i+1}s_is_{i+1}$ and $s_is_j = s_js_i$,  for $|i-j|>1$, any word in $\ov{s}_1, \ldots, \ov{s}_{m-1}$ reduces to an element $\ov{w}$ modulo $I$. Thus   $\{ \ov{w}+ I : w\in \Sigma_m\}$  generates $H$ as left $R$-module. On the other hand an element of the form $\sum_{w\in \Sigma_m} a_w \ov{w} $, with $a_w\in R$ belongs to $I$ if and only if $a_w=0$, for any $w\in \Sigma_m$. Thus $H$ is free over $R$ with basis $\{ \ov{w} : w\in \Sigma_m\}$ and  $\dim(H)=\dim(R)^m m!$ follows. 

(2) Let $w,v \in \Sigma_m$ then there exists a unique element $\gamma(w,v)\in R$ such that $\ov{w}\:\ov{v} = \gamma(w,v)\ov{wv}$. More precisely, let $w= s_{i_1}\cdots s_{i_k}$ and $v= s_{j_1}\cdots s_{j_l}$  be the chosen minimal presentations of $w$ and $v$. If $wv$ has length $k+l$, then  set $\gamma(w,v)=1$. If the length of $wv$ is less than $k+l$, then some reduction occurred, which means that some relations of the type $s_i^2=\id$ must have reduced the length of  $wv$. In $H$, the reduction takes the form of replacing $\ov{s}_i^2$ by $t_{s_i}\in R^\times$. Depending on where in $wv$ the replacement takes place, some automorphism of the form $\sigma_u$ is applied to shift  $t_{s_i}$ to the ``left'' of $\ov{wv}$. Hence  $\ov{w}\:\ov{v} = \gamma(w,v) \ov{wv}$, where $\gamma(w,v)$ is a product of  elements of the form $\sigma_u(t_{s_i})$, for some $u\in \Sigma_m$ and some $i$.

(3) Since $J_{s_i}$ is central in $R\otimes R$, also $t_{s_i} = \mu_R(J_{s_i})$ is central in $R$. Furthermore, as $(\epsilon_B\otimes \id_B)(J)=1$, also $\epsilon(t_{s_i})=1$ and therefore $\epsilon(\sigma_u(t_{s_i}))=1$. Thus $\epsilon(\gamma(w,v))=1$, for any $w,v\in \Sigma_m$.

(4) If $B$ is semisimple, then $R$ is semisimple and the antipode of $R$ is an involution.  Since $S^2(\ov{w})=\ov{w}$, for $w\in \Sigma_m$, the antipode of $H$ is an involution and $H$ is semisimple.  
Let $\int_B$ be the integral of $B$, then  $\int = \int_B\otimes \cdots \otimes \int_B\in R=B^{\otimes m}$ is an integral for $R$. Note that $\int$ is invariant under the action of $\Sigma_m$. Hence, for $v\in \Sigma_m$, we have 
\begin{equation}
\ov{v}\int\sum_{w\in \Sigma_m}\ov{w} = \sigma_v(\int)\sum_{w\in \Sigma_m} \gamma(v,w)\ov{vw} =  \sum_{w\in \Sigma_m}\int\epsilon(\gamma(v,w)) \ov{vw} = \int \sum_{w\in \Sigma_m} \ov{w}.
\end{equation}
\end{proof}

\section{Generalized Kac-Paljutkin algebras}
The group algebra of a finite cyclic group provides a twist that satisfies all the requirements of the last Theorem.  Let  $n,m\geq 2$, $q$ a primitive $n$th root of unity and $\Z_n=\langle x : x^n=1\rangle$ the cyclic group of order $n$. For each $0\leq k < n$ set 
$$ e_k = \frac{1}{n} \sum_{i=0}^{n-1} q^{-ik} x^i.$$
Then $\{e_0, \ldots, e_{n-1}\}$ is a complete set of orthogonal idempotents of $B=\K \Z_n$.

\begin{lemma}\label{lem:cyclic}
Let $B=\K\Z_n$. Then 
$J=\sum_{k=0}^{n-1} e_k\otimes x^k \in B\otimes B$ is a twist for $B$ satisfying (\ref{eq:strong_twist}) and (\ref{eq:superstrong}), such that $(S\otimes S)(J)=J$ and $(\id\otimes S)(J)=J^{-1} = (S\otimes \id)(J)$ hold.
\end{lemma}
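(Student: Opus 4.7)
The plan is to verify the five conditions one by one via direct computation, exploiting the two natural bases of $B = \K\Z_n$: the group basis $\{x^i\}$ and the primitive-idempotent basis $\{e_k\}$. The tools I would use repeatedly are $e_k e_\ell = \delta_{k\ell}e_k$, $x^\ell e_k = q^{\ell k} e_k$, the dual expansion $x^k = \sum_m q^{mk} e_m$, together with the Hopf data $\Delta(e_k) = \sum_{i+j=k}e_i\otimes e_j$, $\Delta(x^k) = x^k\otimes x^k$, $\epsilon(e_k)=\delta_{k,0}$, $\epsilon(x^k)=1$, $S(x^k)=x^{-k}$, and $S(e_k) = e_{-k}$ (all indices mod $n$).

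For the twist condition (\ref{eq:twist}), I would expand $(\Delta\otimes\id)(J)(J\otimes 1)$ using $\Delta(e_k)$ and collapse with $e_ie_\ell=\delta_{i\ell}e_i$, and similarly expand $(\id\otimes\Delta)(J)(1\otimes J)$ using $\Delta(x^k)=x^k\otimes x^k$ and $x^k e_\ell = q^{k\ell} e_\ell$; after rewriting each remaining $x^k$ in the idempotent basis, both sides collapse to the common expression $\sum_{i,j,r} q^{ij+r(i+j)}\, e_i\otimes e_j\otimes e_r$. The counit conditions are immediate from $\epsilon(e_k)=\delta_{k,0}$ and $\sum_k e_k = 1$.

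The strong twist condition (\ref{eq:strong_twist}) reduces to equation (\ref{eq:strongtwist}) applied to our $J$: substituting, the two sides simplify respectively to $\sum_{k,\ell} e_\ell\otimes e_{k-\ell}\otimes x^\ell\otimes x^k$ and $\sum_{k,\ell} e_k\otimes e_\ell\otimes x^k\otimes x^{k+\ell}$, and these coincide after the change of summation $(k,\ell)\mapsto(k-\ell,\ell)$. For the super-strong identity (\ref{eq:superstrong}), the left-hand side $\Delta_{B\otimes B}(J)$ expands directly to $\sum_{k,i} e_i\otimes x^k\otimes e_{k-i}\otimes x^k$, while the right-hand side is a threefold product in $B^{\otimes 4}$ that I would unfold by commuting idempotents past powers of $x$ (picking up $q$-scalars), collapsing orthogonal idempotents, and finally re-expanding each remaining $x^\bullet$ in the idempotent basis; both sides then reduce to the symmetric expression $\sum_{k,\ell,p,s} q^{(k+p)(\ell+s)}\, e_k\otimes e_\ell\otimes e_p\otimes e_s$.

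The antipode compatibilities are the cheapest step: from $S(e_k)=e_{-k}$ and $S(x^k)=x^{-k}$ one gets $(S\otimes S)(J)=\sum_k e_{-k}\otimes x^{-k}=J$ after $k\mapsto -k$, and a direct check using $e_ke_\ell=\delta_{k\ell}e_k$ shows $J^{-1}=\sum_k e_k\otimes x^{-k}$, which coincides with both $(\id\otimes S)(J)$ and $(S\otimes\id)(J)$ (the latter after a further relabeling). The main computational burden is the super-strong condition, as it is the only place with four tensor factors and a threefold product; every other condition collapses on a single pass once one has committed to writing $x^k$ in the idempotent basis.
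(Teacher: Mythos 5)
Your proposal is correct and follows essentially the same route as the paper: direct computation with the orthogonal idempotents $e_k$, the relations $x^\ell e_k = q^{\ell k}e_k$ and $\Delta(e_k)=\sum_i e_i\otimes e_{k-i}$, and the symmetry of $J$. The only cosmetic differences are that you verify the basic twist axiom explicitly (the paper cites it as well known) and check (\ref{eq:strong_twist}) via the equivalent four-leg identity (\ref{eq:strongtwist}) rather than the twist axiom for $(e_1^2\otimes e_2^2)(J)$ directly; all your intermediate expressions check out.
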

\begin{proof} It is well-known that $J$ is a twist for $B$.
We calculate $(S\otimes S)(J) = \frac{1}{n}\sum_{i,k=0}^{n-1} q^{-ik} x^{-i} \otimes x^{-k} = J$. 
Moreover,
\begin{equation}
(\id\otimes S)(J) J = \sum_{k,l=0}^{n-1} e_ke_l \otimes x^{l-k} = \sum_{k=0}^{n-1} e_k \otimes 1_B = 1_B\otimes 1_B,
\end{equation} shows $(\id \otimes S)(J)=J^{-1}$.  Since $S^2=\id$ and $(S\otimes S)(J^{-1})=J^{-1}$, also $(S\otimes \id)(J)=J^{-1}$ holds.  Next we  show that 
 $$J' =(e_1^2\otimes e_2^2)(J) = \sum_{k=0}^{n-1} e_k \otimes 1_B \otimes 1_B \otimes x^k =
 \sum_{k=0}^{n-1} x^k \otimes 1_B \otimes 1_B \otimes e_k$$
 is a twist for $B\otimes B$:
\begin{eqnarray*}
(\Delta_{B\otimes B} \otimes \id_{B\otimes B})(J')(J'\otimes 1_{B\otimes B})
&=& 
\sum_{k,l} (\Delta_{B\otimes B}(x^k\otimes 1)\otimes 1\otimes e_k) (e_l\otimes 1\otimes 1 \otimes x^l \otimes 1\otimes 1)\\
&=& \sum_{k,l} x^ke_l \otimes 1 \otimes x^k \otimes x^l \otimes 1 \otimes e_k\\
&=&
\sum_{k,l} e_l \otimes 1 \otimes x^k \otimes x^l \otimes 1 \otimes x^le_k\\
&=&
\sum_{k,l} \left(e_l \otimes 1 \otimes \Delta_{B\otimes B}(1\otimes x^l)\right)  (1\otimes  1 \otimes x^k \otimes 1 \otimes 1\otimes e_k)\\
&=& (\id_{B\otimes B} \otimes \Delta_{B\otimes B}(J')(1_{B\otimes B} \otimes J')
\end{eqnarray*}
where we use $x^ke_l =q^{kl}e_l$ and $q^{kl}e_k = x^le_k$.
Lastly  we will verify equation (\ref{eq:superstrong}). For this consider 
\begin{eqnarray*}
(e_1\otimes e_2)(J) (e_2\otimes e_1)(J) (J\otimes J) 
&=&
\sum_{k,l,s,t} (e_k\otimes x^l \otimes e_l \otimes x^k)(e_s\otimes x^s\otimes e_t \otimes x^t)\\
&=&\sum_{k,l} e_k\otimes x^{k+l} \otimes e_l \otimes x^{k+l}\\
&=& \sum_{k,j} e_k \otimes x^{j} \otimes e_{j-k} \otimes x^{j} 
= \Delta_{B\otimes B}(J),
\end{eqnarray*}
because
$ \Delta_B(e_j)=\sum_{k} e_k \otimes e_{j-k}$ holds.
\end{proof}

From  Theorem \ref{thm:HopfAlgebra} and Lemma \ref{lem:cyclic}, we deduce that there exists a non-trivial semisimple Hopf algebra 
$H_{n,m} = \left(\K\Z_n\right)^{\otimes m} \#_\gamma \Sigma_m$ of dimension $n^m m!$, for any integers $n,m\geq 2$ (provided $\K$ contains a primitive $n$th root of unity).

\begin{theorem}[Generalized Kac-Paljutkin algebras]
For any $n,m \geq 2$, $\K$ a field of characteristic zero with primitive $n$th root of unity $q$, there exists a semisimple Hopf algebra $H_{n,m} = \K\Z_n^{\otimes m} \#_\gamma \Sigma_m$ of dimension $n^m m!$ that is an extension of   $\K \Sigma_m$, by $\K\Z_n^{\otimes m}$, which  is generated by elements $x_1,\ldots, x_m, z_1,\ldots, z_{m-1}$ such that 
$$x_i^n=1, \qquad x_ix_j=x_ix_j, \qquad z_k x_i = x_{s_k(i)} z_k, \qquad \forall \:i,j,k,$$ where $s_k$ is the transposition $s_k=(k, k+1)$. Furthermore, for all $k,l$ with $|k-l|>1$:
\begin{equation}
z_kz_l=z_lz_k, \qquad z_kz_{k+1}z_k = z_{k+1}z_kz_{k+1},  \qquad   z_k^2=\frac{1}{n} \sum_{i,j=0}^{n-1} q^{-ij} x_{k}^i x_{k+1}^j.
\end{equation}
The elements $x_i$ are group-like, while 
\begin{equation}
\Delta(z_k) =  \left(\frac{1}{n} \sum_{i,j=0}^{n-1} q^{-ij} x_{k}^i \otimes  x_{k+1}^j \right) (z_k\otimes z_k), \quad \epsilon(z_k)=1, \quad S(z_k)=z_k.
\end{equation}
\end{theorem}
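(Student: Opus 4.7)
The plan is to apply Theorem \ref{thm:HopfAlgebra} with the base Hopf algebra $B = \K\Z_n$ and the twist $J = \sum_{k=0}^{n-1} e_k \otimes x^k$ supplied by Lemma \ref{lem:cyclic}. Since $B$ is commutative, $J$ is automatically central in $B\otimes B$, and Lemma \ref{lem:cyclic} has already verified the conditions (\ref{eq:strong_twist}) and (\ref{eq:superstrong}) together with the antipode compatibilities $(S\otimes S)(J)=J$ and $(\id\otimes S)(J)=J^{-1}=(S\otimes \id)(J)$. All hypotheses of Theorem \ref{thm:HopfAlgebra} are therefore in place. The theorem produces a semisimple Hopf algebra $H_{n,m} = \K\Z_n^{\otimes m}\#_\gamma \Sigma_m$ of dimension $n^m m!$, arising as an extension of $\K\Sigma_m$ by $R=\K\Z_n^{\otimes m}$; semisimplicity uses part (4) of the theorem, which applies in characteristic zero.

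The remaining task is to translate this abstract construction into the explicit presentation. I would set $x_i := e_i^m(x)$, the generator of the $i$-th tensorand of $R$, and $z_k := \ov{s}_k + I$ for $1 \leq k < m$. The relations $x_i^n=1$ and $x_ix_j=x_jx_i$ are inherited from the commutative factor $\K\Z_n^{\otimes m}$, while $z_kx_i = x_{s_k(i)}z_k$ reflects the skew monoid identity $\ov{s}_ka = \sigma_{s_k}(a)\ov{s}_k$ together with the obvious action $\sigma_{s_k}(x_i) = x_{s_k(i)}$. The braid relations $z_kz_{k+1}z_k = z_{k+1}z_kz_{k+1}$ and the far commutations $z_kz_l=z_lz_k$ for $|k-l|>1$ are precisely those defining the ideal $I$ in Theorem \ref{thm:HopfAlgebra}. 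That no further hidden relations appear is guaranteed by statement (1) of that theorem, which provides an explicit $R$-basis indexed by $\Sigma_m$.

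The only non-tautological computation is the explicit form of $z_k^2 = t_{s_k}$. Rewriting $J$ as $\frac{1}{n}\sum_{i,j=0}^{n-1} q^{-ij}\, x^i\otimes x^j$ and applying $e_k^m\otimes e_{k+1}^m$ gives $J_{s_k} = \frac{1}{n}\sum_{i,j} q^{-ij}\, x_k^i \otimes x_{k+1}^j$; multiplying the two tensor legs then yields $t_{s_k} = \mu_R(J_{s_k}) = \frac{1}{n}\sum_{i,j} q^{-ij}\, x_k^i x_{k+1}^j$, which is the claimed formula. The comultiplication $\Delta(z_k) = J_{s_k}(z_k\otimes z_k)$, counit $\epsilon(z_k)=1$ and antipode $S(z_k)=z_k$ then come directly from Theorems \ref{thm:bialgebra} and \ref{thm:HopfStructure}(2), while the $x_i$ are group-like because they lie in the Hopf subalgebra $R$. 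The main obstacle is really bookkeeping: all substantive work has been absorbed into the preceding theorems and into the twist verification of Lemma \ref{lem:cyclic}, so the proof reduces to checking that the listed generators and relations capture exactly the Hopf structure produced by Theorem \ref{thm:HopfAlgebra}.
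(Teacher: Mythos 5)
Your proposal is correct and follows exactly the route the paper takes: the paper derives this theorem in one line by combining Theorem \ref{thm:HopfAlgebra} with Lemma \ref{lem:cyclic}, and your translation into the explicit presentation (identifying $x_i = e_i^m(x)$, $z_k = \ov{s}_k + I$, and computing $t_{s_k} = \mu_R(J_{s_k})$) matches the paper's subsequent discussion of $t_k$ and $J_k$. No gaps.
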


As before, we  write $t_k:=\frac{1}{n} \sum_{i,j=0}^{n-1} q^{-ij} x_{k}^i x_{k+1}^j$ and $J_k := \frac{1}{n} \sum_{i,j=0}^{n-1} q^{-ij} x_{k}^i \otimes  x_{k+1}^j $. Since $(\id\otimes S)(J_k) = J_k^{-1}$, we also have 
$$t_k^{-1} = \frac{1}{n} \sum_{i,j=0}^{n-1} q^{-ij} x_{k}^i x_{k+1}^{-j}.$$


\begin{remark}  Let $G:=\Z_n^m$. Then $\K G \simeq \K\Z_n^{\otimes m}$. By construction,  $\K G$ is a Hopf subalgebra of $H:=H_{n,m}$ and $\K \Sigma_m$ is a Hopf quotient algebra of $H$.
Since the ring of $\K \Sigma_m$-coninvariants of $H$ is $\K G$ and since $\K G \simeq \K^G$ as Hopf algebras\footnote{More precisely, for any $\alpha \in \{0,\ldots, n-1\}^m$ set $e_{\alpha} :=   \frac{1}{n^m} \prod_{k=1}^{m} \sum_{i=0}^{n-1} q^{-i\alpha_k} x_k^i \in \K[G].$ Then $\{ e_{\alpha} : \alpha \in \{0,\ldots, n-1\}^m\}$ is a complete set of orthogonal idempotents of $\K[G]$. Equally, the projections $p_\alpha \in \K^G$ defined by $p_\alpha (x^\beta) = 1$ if and only if $\alpha=\beta$ and $0$ otherwise, for any $\beta\in \{0,\ldots, n-1\}^m$, where $x^\beta := x_1^{\beta_1}\cdots x_m^{\beta_m}$, form a complete set of orthogonal idempotents of $\K^G$. Defining $\psi: \K[G]\to \K^G$ with $\psi(e_\alpha)=p_\alpha$ yields an algebra homomorphism, which is actually a Hopf algebra isomorphism.}, as  $G$ is finite Abelian, the Hopf algebra $H$ is an extension of $\K \Sigma_m$ by $\K^G$ and hence equivalent to to an element of  $\operatorname{Opext}(\K \Sigma_m, \K^G)$, the set of equivalence classes of all extensions of $\K\Sigma_m$ by $\K^G$, associated with a matched pair $(\Sigma_m, G)$. Here the matched pair is given by $\Sigma_m$ acting on $G$ by permutations of its components and $G$ acting trivially on $\Sigma$.  
 The set  $\operatorname{Opext}(\K \Sigma_m, \K^G)$ forms a group with identity element being the group ring $\K[G\rtimes \Sigma_m]$ of the semidirect product of $G$ and $\Sigma_m$. Since  $H_{n,m}$ is not cocommutative, $H_{n,m}$ is equivalent to a non-trivial element of $\operatorname{Opext}(\K\Sigma_m, \K^G)$.
By \cite[Proposition 1.5]{Masuoka2002}, elements of $\operatorname{Opext}(\K\Sigma_m, \K^G)$ are equivalent to a bicrossed products $\K^G {\#^\tau_\gamma}\: \K\Sigma_m$, for cocycles $\gamma: \Sigma_m \times \Sigma_m \to \K^G$ and $\tau: G\times G\to \K^{\Sigma_m}$. From the multiplication of $H_{n,m}$, we see that $\tau$ is constant $1_{K^{\Sigma_m}}$, while $\gamma$ depends on the chosen twist $J$.

For $m=2$, the elements of $\operatorname{Opext}(\K\Sigma_2, \K^{\Z_n\times \Z_n })$ have been classified in \cite[Theorem 2.1]{Masuoka1997} and are in bijective correspondence with all $n$th roots of unity in $\K$.  Thus, the Hopf algebras $H_{n,2}$ had  already been discovered in \cite{Masuoka1997}, prior to \cite{PanseraPhD}. In particular, for each root of unity $q$, there exists a twist $J$ (as defined above) that determines the comultiplication of such an extension via $\Delta(z)=J(z\otimes z)$.  The cocycle $\gamma $ is determined by the element $t=\mu(J)$, where $\gamma(s,s)=t$ for the generator $s$ of $\Sigma_2=\Z_2$. 

For $m\geq 3$, it would be interesting to classify $\operatorname{Opext}(\K\Sigma_m, \K^G)$ in general, whereas \(H_{n,m}\) provides a non-trivial example of it. Again, for each $n$-th root of unity $q$, we use the same twist $J$ of $B=\K\Z_n$ as above. For each Coxeter generator $s_{i}$ of $\Sigma _{m}$, we deform the comultiplication of $z_i = \overline{s_i}$ by defining $\Delta(z_i)=J_i(z_i\otimes z_i)$, where $J_i = (e_i^m\otimes e_{i+1}^m)(J) \in R\otimes R$ for $R=B^{\otimes m}=\K G$ means that $J$ is placed into the $i$-th and $(i+1)$-st components, respectively. As Example \ref{exa:m3} illustrates, the cocycle $\gamma $ is determined by $t=\mu(J)$, which is embedded suitably into $R$ such that $\gamma(s_i,s_i)=t_i:=\mu(J_i)$ and $\gamma(s_i,s_j)=1$ for $i\neq j$. Hence, the non-triviality of $H_{n,m}$ as an element of $\operatorname{Opext}(\K\Sigma_m, \K^G)$ stems essentially from the non-triviality of the Kac-Paljutkin algebra $H_{n,2}$ and has just been placed into an $m$-fold tensor product of $\K\Z_n$.

Note that it is shown in \cite{Sebastian} that $H_{n,m}$ is isomorphic, as an algebra, to the group ring $\K[\Z_n \wr \Sigma_m]$ of the wreath product of $\Z_n$ and $\Sigma_m$. Since the wreath product $\Z_n\wr \Sigma_m$ is the semidirect $\Z_n^m \rtimes \Sigma_m$, the Hopf algebra $H_{n,m}$ is isomorphic as an algebra to the  smash product $\K^G \#\: \K\Sigma_m$ (without cocycle).  However, due to the fact that $H_{n,m}$ is not cocommutative, this isomorphism is not an isomorphism of Hopf algebras.

\end{remark}

\begin{exa}[Case $m=2$]
 Write $x$ for the generator of $B=\K\Z_n$ and write the elements of $R=B^{\otimes 2} = \K \Z_n \otimes \K\Z_n$ as linear combinations of elements $x^iy^j$, where $x$ stands for $x\otimes 1$ and $y$ for $1\otimes x$.  Since $m=2$, we have only one transposition, which is $s=(12)$. Hence $\sigma_s(x)=y$, $\sigma_s(y)=x$ and the twist $J=\sum_{j=0}^{n-1} e_j\otimes x^j$ for $B$ becomes the twist 
$$J_s = \sum_{j=0}^{n-1} e_j \otimes y^j = \frac{1}{n} \sum_{i,j=0}^{n-1} q^{-ij} x^i \otimes y^j $$
for $R$. 
Instead of $\ov{s}$ write $z$ for the generator of the monoid  $M=\langle \ov{s}\rangle$. 
Set $t:= \mu(J_s) = \frac{1}{n}\sum_{i,j=0}^{n-1} q^{-ij} x^iy^j$. The Hopf algebra $H_{n,2}=(R\#M)/\langle z^2-t\rangle$ has therefore generators $x,y,z$ subject to 
$$ xy=yx, \quad x^n=1=y^n, \quad zx=yz, \quad zy=xz, \quad z^2= t.$$
While $x$ and $y$ are group-like, $$\Delta(z) = \left( \frac{1}{n} \sum_{i,j=0}^{n-1} q^{-ij} x^i \otimes y^j \right)(z\otimes z).$$
A basis for $H_{n,2}$ are the monomials  $\{x^iy^jz^k : 0\leq i,j<n, k\in\{0,1\}\}$.
Hence $H_{n,2}$ is precisely the generalized Kac-Paljutkin algebra $H_{2n^2}$ considered by Pansera.
In particular, $H_{2,2}$ is the Kac-Paljutkin Hopf algebra itself.
Note that  $H_{n,2} = \K[\Z_n\times \Z_n] \#_\gamma \Sigma_2$, for $\gamma:\Sigma_2 \times \Sigma_2 \to R^\times$ given by $$\gamma(\id,\id)=\gamma(\id, (12)) = \gamma((12),\id) = 1, \qquad \gamma((12),(12)) = t.$$

\end{exa}

\begin{exa}[Case $m=3$]\label{exa:m3} Let $q$ be a primitive third root of unity.
Write $x$ for the generator of $B=\K\Z_n$ and $R=B^{\otimes 3} = \K \Z_n \otimes \K\Z_n\otimes \K\Z_n$. Elements of $R$ are  linear combinations of  monomials $x_1^ix_ 2^jx_3^k$, where $x_1$ stands for $x\otimes 1\otimes 1$, $x_2$ stands for  $1\otimes x\otimes 1$ and  $x_3$ stands for  $1\otimes 1\otimes x$.  The transpositions $s_1=(12)$ and $s_2=(23)$ are generators of $\Sigma_3$.  Write $\sigma_i = \sigma_{s_i}$ for the corresponding automorphism of $R$. The twist $J=\sum_{j=0}^{n-1} e_j\otimes x^j$ for $\K \Z_n$ yields two twists
$$J_{s_1} = \frac{1}{n} \sum_{i,j=0}^{n-1} q^{-ij} x_1^i \otimes x_2^j \qquad \mbox{ and } \qquad 
J_{s_2} = \frac{1}{n} \sum_{i,j=0}^{n-1} q^{-ij} x_2^i \otimes x_3^j$$
for $R$. 
Set $z_1:= \ov{s_1}$ and  $z_2:= \ov{s_2}$ for the generator of the free monoid  $M=\langle \ov{s_1}, \ov{s_2}\rangle$. 
The Hopf algebra 
\begin{equation} 
H_{n,3}=(R\#M)/\langle z_1^2-t_{1}, z_2^2-t_{2}, z_1z_2z_1-z_2z_1z_2\rangle = \left(\K \Z_n\right)^{\otimes 3} \#_\gamma \Sigma_3\end{equation}
has therefore generators $x_1,x_2,x_3,z_1,z_2$ subject to $x_ix_j=x_jx_i$ and $x_i^n=1$, for all $i,j$ and
$$ z_1x_1=x_2z_1, \quad z_1x_2=x_1z_1, \quad z_1x_3=x_3z_1$$
$$ z_2x_1=x_1z_2, \quad z_2x_2=x_3z_2, \quad z_2x_3=x_2z_2$$
$$z_1z_2z_1 = z_2z_1z_2, \qquad z_1^2=t_1:=\frac{1}{n} \sum_{i,j=0}^{n-1} q^{-ij} x_1^ix_2^j, \qquad z_2^2=t_2:=\frac{1}{n}\sum_{i,j=0}^{n-1} q^{-ij} x_2^ix_3^j.$$
While the $x_i$ are group-like, we have 
$$\Delta(z_1) = \left( \frac{1}{n} \sum_{i,j=0}^{n-1} q^{-ij} x_1^i \otimes x_2^j \right) (z_1\otimes z_1) \qquad
\Delta(z_2) = \left(\frac{1}{n} \sum_{i,j=0}^{n-1} q^{-ij} x_2^i \otimes x_3^j \right) (z_2\otimes z_2).$$
 $H_{n,3}$ has dimension $6n^3$ and basis $ \left\{x_1^ix_2^jx_3^k \ov{w} : 0\leq i,j,l<n, \:\ov{w} \in \{1, z_1, z_2, z_1z_2, z_2z_1, z_1z_2z_1\} \right\}$.
The $2$-cocycle $\gamma$ is determined by the multiplication in $H_{n,3}$ and given by $\gamma(\id, w)=\gamma(w,\id)=1$, for $w\in \Sigma_3$, and the following table. Note that  $t_2=\sigma_1\sigma_2(t_1)=t^{\sigma_2\sigma_1}$ and 
$\sigma_1(t_2)=\sigma_2(t_1) =  \frac{1}{n}\sum_{i,j=0}^{n-1} q^{-ij} x_1^ix_3^j =t^{\sigma_2}$. The values of the $2$-cocycle $\gamma$ can be calculated and are as follows:
\[\begin{array}{r|ccccc}
	\gamma	 & s_1 					& s_2	& s_1s_2 	& s_2s_1  & s_1s_2s_1\\\hline
s_1		 & t_1					&  	1	& 	t_1	& 	1	&  	t_1		\\
s_2		 & 	1					&  	t_2	& 	1	& 	t_2	&  	t_2	\\
s_1s_2	 & 	1					&  	t_3	& 	t_1	& 	t_1t_3	& t_1t_3			\\
s_2s_1	 & 	t_3			&  	1	& 	t_2t_3	& t_2	& t_2t_3 			\\
s_1s_2s_1 & 	t_2	&  	t_1	& 	t_2t_3	& t_1t_3 		&  t_1t_2t_3
\end{array}\]
\end{exa}
 \begin{remark}
 We see, that $H_{n,2}$ embeds into $H_{n,3}$ by sending $x,y, z$ of $H_{n,2}$ to the corresponding elements $x_1,x_2,z_1$ in $H_{n,3}$. More generally,  for $n,m\geq 2$, the algebra $R=\K\Z_n^{\otimes m}$ embeds into $R' = \K\Z_n^{\otimes m+1}$ by sending   $a_1\otimes \cdots \otimes a_m$ into $a_1\otimes \cdots \otimes a_m \otimes 1_B$. Similarly,  $\Sigma_m$ can be considered a subgroup of $\Sigma_{m+1}$, where we identify the generators $s_1,\ldots, s_{m-1}$ with the first $m-1$ generators of $\Sigma_{m+1}$.
 Hence, if $x_1,\ldots, x_m$ denotes the basis of $R=\K\Z_n^{\otimes m+1}$ and $z_1,\ldots, z_{m-1}$ the remaining algebra generators of $H_{n,m}$ and if $x_1', \ldots, x_{m+1}'$ denote the basis of $R'$ and $z_1',\ldots , z_{m+1}'$ the remaining generators of $H_{n,m+1}$, then mapping $x_i$ to $x_i'$ and $z_i$ to $z_i'$ yields an algebra embedding of $H_{n,m}$ into $H_{n,m+1}$, which is also an embedding of Hopf algebras. 
 \end{remark}

 \begin{remark}\label{rem:comult}
 Let $H=H_{n,m}$ and $w\in \Sigma_m$. We claim that the comultiplication of $\ov{w}$ in $H$ looks like $\Delta(\ov{w}) = J(w) (\ov{w}\otimes \ov{w})$ for an invertible element $J(w)\in R\otimes R$ with $\epsilon(\mu_R(J(w)))=1$. We will prove this by induction on the length of the chosen representation $w=s_{i_1}\ldots s_{i_k}$ in the generators $s_1,\ldots s_{m-1}$. For $k=1$, we have $w=s_i$, for some $i$ and by definition $\Delta(z_i)=J_i(z_i\otimes z_i)$ for $z_i=\ov{s_i}$ and $J_i = J_{s_i} = (e_i^m\otimes e_{i+1}^m)(J)$, which is invertible, since $J$ is invertible. Moreover, $\mu_R(J_i)=e_{i, i+1}^m(J) = t_i$ and $\epsilon(t_i)=1$. Now suppose that $w=s_iv$ has length greater than $1$, where $v\in \Sigma_m$ and there exists $J(v)\in R\otimes R$ such that $\Delta(\ov{v})=J(v)(\ov{v}\otimes \ov{v})$. Then 
\begin{equation}
\Delta(\ov{w}) = J_i (z_i\otimes z_i) J(v) (\ov{v}\otimes \ov{v}) = J_i J(v)^{\sigma_i} (\ov{w} \otimes \ov{w}),
\end{equation}
where we denote by $J(w)^{\sigma_i} = (\sigma_i\otimes \sigma_i)(J(w))$. Setting $J(w)=J(s_iv)=J_iJ(v)^{\sigma_i} $ proves our claim, since also $\mu_R(J(w)) = t_i \sigma_i( \mu_R(J(v)))$ has counit $1$.

Thus, given a subgroup $N$ of $\Sigma_m$, we can consider the Hopf subalgebra of $H$ generated by $R$ and $\{\overline{w} : w\in N\}$,   which we shall denote by $R\#_\gamma N$. Since for any $w\in N$, $\Delta(\overline{w})=J(w)(\overline{w}\otimes \overline{w}) \in (R\#_\gamma N) \otimes (R\#_\gamma N)$ we conclude that the subalgebra $R\#_\gamma N$ is a semisimple Hopf subalgebra of $H$.

 \end{remark}
 
 \begin{exa}\label{exa:cyclic}
 Consider $H_{n,m}=R\#_\gamma \Sigma_m$ with generators  as above and consider $\theta$, the product of all $z_i's$, i.e. $\theta =z_1 \cdots z_{m-1}$. Let $H$ be the subalgebra of $H_{n,m}$  generated by $R$ and $\theta$. We claim that $H$ is equal to $R\#_{\gamma} \langle s\rangle$, where $s=(12\cdots m)$ is the cycle of length $m$.  Clearly, $\theta = \ov{s}$ and we claim that $\theta^k = \left(\prod_{i=1}^{k-1} \gamma(s^i,s)\right) \ov{s^k}$, for $2\leq k\leq m$, since inductively, if $\theta^k = c_k \ov{s^k}$, for some $c_k\in R^\times$, then 
 $\theta^{k+1} = c_k \ov{s^k} \: \ov{s} = c_k\gamma(s^k,s) \ov{s^{k+1}}$.
Thus $H=R\#_{\gamma} \langle s\rangle$,  as $\left(\prod_{i=1}^{k-1} \gamma(s^i,s)\right) \in R^\times$.
 Note that $\theta^m = \left(\prod_{i=1}^{m-1} \gamma(s^i,s)\right) =:t \in R^\times$. Hence $H\simeq R[\theta;\sigma]/\langle \theta^m-t\rangle$, where $\sigma = \sigma_s$.
In particular, $H$ is generated by $x_1,\ldots, x_m$ and $\theta$ subject to  
 $\theta x_i = x_{s(i)} \theta$ and $\theta^m=t$. While $\Delta(\theta) = J(s)(\theta\otimes \theta)$, with $J(s)$ as in Remark \ref{rem:comult}. 
Note that  $ \theta^{m-1}  = \tilde{t}\: \ov{s}^{m-1}$, for $\tilde{t} = t \gamma(s^{m-1},s)^{-1}$.  Since $s^{-1} = s^{m-1}$ we obtain
 \begin{equation}
 \theta^{m-1}  = \tilde{t}  \:\ov{s} ^{ -1} =\tilde{t} \:z_{m-1} \cdots z_1 = \tilde{t} S(\theta).
 \end{equation}
 Hence $S(\theta) = \tilde{t}^{-1} \theta^{m-1} = t^{-1} \gamma(s^{m-1},s) \theta^{m-1}$.
 The dimension of $H$ is $mn^m$. For $m=2$, we obtain $H'=H_{n,2}$.
 \end{exa}

\section{Actions of the generalized Kac-Paljutkin algebra}
As before, let $H=H_{n,m} = R\#_\gamma \Sigma_m$, with $R=B^{\otimes m}$ and $B=\K\Z_n$, for some $n,m\geq 2$.

The aim of this section is to define an action of $H$ on a non-commutative algebra $A$ such that the action does not factor through a group algebra - in other words, the action is inner-faithful. To achieve this, we first examine $m$-dimensional simple $H$-modules $V$ (Lemma \ref{lem:simple}) in order to define a quantum polynomial algebra $A$ generated by the basis $u_1,\ldots, u_m$ of $V$, such that $V$ is an $H$-submodule of $A$. After introducing commutation relations between the generators $u_i$, we obtain a Noetherian domain with an inner-faithful $H$-action (Theorem \ref{thm:action}).

 Let $q$ be the primitive $n$th root of unity used in the definition of the twist $J$ of $B$. Since $\K \Sigma_m$ is a factor ring of $H$, any irreducible representation over $\Sigma_m$ is a simple module over $H$.
As above, let  $x_1, \ldots, x_m$ and $z_1,\ldots, z_{m-1}$ be the generators of $H$. For each transposition $s_i = (i,  i+1)$ we have $z_i=\ov{s}_i$ and set $\sigma_i := \sigma_{s_i}$. 

Assume that there exists a square root $p$ of $q$.
Given $0\leq a,b <n$ we define a left $H$-module structure on an $m$-dimensional vector space $V_{a,b}$ with basis $u_1,\ldots, u_m$ by letting $x_i$ and $z_k$ respectively act as $m\times m$-matrices:
\begin{equation}
X_i = \left[\begin{array}{cccccc}
q^b &   	   		&		&		&		& \\
	    &		    \ddots & 		&		&		&\\	
	    &		   		& q^a	 	&  		&		&	 \\
	   &   	         		&		&		&\ddots	& \\
	    &		    		 & 		&		&		& q^b
\end{array}\right]
\qquad 
Z_k = \left[\begin{array}{cccccc}
p^{b^2} &   	   		&		&		&		& \\
	    &		    \ddots & 		&		&		&\\	
	    &		   		& 0	 	& 1 		&		&	 \\
	    &		  		& q^{ab}  &  0  		&		&\\	
	   &   	         		&		&		&\ddots	& \\
	    &		    		 & 		&		&		& p^{b^2}
\end{array}\right]
\end{equation}
where $X_i$ has on its diagonal the value $q^b$, except on the $i$th row, where it has $q^a$. The matrix $Z_k$
 has on its diagonal the value $p^{b^2}$ except for the $k$th and $k+1$st rows (resp. colums), where one has a $2\times 2$ block of the form $\left[\begin{array}{cc} 0 & 1 \\ q^{ab} & 0\end{array}\right]$. It is clear, that if $1\leq k<l<m$ such that $|k-l|>1$, then $Z_k$ and $Z_l$ commute, since their $2\times 2$-block is in different rows and columns. Hence $z_kz_l\cdot u_j=z_lz_k\cdot u_j$ holds, for $|k-l|>1$.

For the braid relation $z_kz_{k+1}z_k = z_{k+1}z_kz_{k+1}$ it is enough to look at the $3\times 3$-submatrix of the rows and columns of index $k,k+1$ and $k+2$ of the product $Z_kZ_{k+1}Z_k$, i.e. 
\begin{equation}
\left[\begin{array}{ccc}
0 	& 1 		&	0\\
q^{ab} & 0  	&	0\\	
0 & 0  	&	p^{b^2}
\end{array}\right]
\left[\begin{array}{ccc}
p^{b^2} 	& 0 		&	0\\
0 & 0  	&	1\\	
0 & q^{ab}  	&	0
\end{array}\right]
\left[\begin{array}{ccc}
0 	& 1 		&	0\\
q^{ab} & 0  	&	0\\	
0 & 0  	&	p^{b^2}
\end{array}\right]
= p^{b^2}
\left[\begin{array}{ccc}
0 	& 0 		&	1\\
0& q^{ab}   	&	0\\	
q^{2ab}&0  	&	0
\end{array}\right]
\end{equation}
Similarly, for $Z_{k+1}Z_kZ_{k+1}$ one obtains:
\begin{equation}
\left[\begin{array}{ccc}
p^{b^2} 	& 0 		&	0\\
0 & 0  	&	1\\	
0 & q^{ab}  	&	0
\end{array}\right]
\left[\begin{array}{ccc}
0 	& 1 		&	0\\
q^{ab} & 0  	&	0\\	
0 & 0  	&	p^{b^2}
\end{array}\right]
\left[\begin{array}{ccc}
p^{b^2} 	& 0 		&	0\\
0 & 0  	&	1\\	
0 & q^{ab}  	&	0
\end{array}\right]
= p^{b^2}
\left[\begin{array}{ccc}
0 	& 0 		&	1\\
0& q^{ab}   	&	0\\	
q^{2ab}&0  	&	0
\end{array}\right]
\end{equation}
It is not difficult to check that $Z_kX_i = X_{s_k(i)}Z_k$ holds for all $i$ and $k$. Moreover, using $p^2=q$,
\begin{equation}
Z_k^2 = \left[\begin{array}{llllll}
q^{b^2} &   	   		&		&		&		& \\
	    &		    \ddots & 		&		&		&\\	
	    &		   		& q^{ab} 	&  		&		&	 \\
	    &		  		&    &  q^{ab}  		&		&\\	
	   &   	         		&		&		&\ddots	& \\
	    &		    		 & 		&		&		& q^{b^2}
\end{array}\right] = \frac{1}{n}\sum q^{-st} X_k^s X_{k+1}^t,
\end{equation}
since the latter matrix is a diagonal matrix with 
$\frac{1}{n}\sum_{s,t} q^{-st+bs+bt} = q^{b^2}$ on its $i$th row, for $i\notin\{k,k+1\}$, 
and $\frac{1}{n}\sum_{s,t} q^{-st+as+bt} = q^{ab}$ on its $k$th and $k+1$st row. Thus, $V_{a,b}$ is a left $H$-module.

\begin{lemma}\label{lem:simple}
 $V_{a,b}$ is a simple left $H_{n,m}$-module, if $a\neq b \:(\mathrm{mod}\: n)$. 
\end{lemma}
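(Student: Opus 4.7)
The plan is to exploit the fact that $R=\mathbb{K}\Z_n^{\otimes m}$ acts diagonally on $V_{a,b}$ with respect to the given basis, so that the $R$-isotypic decomposition of $V_{a,b}$ is completely transparent, and then to use the action of the $z_k$ to connect the isotypic lines.

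First I would compute the character of $R$ on each basis vector. Since $X_i$ is diagonal with entry $q^a$ in the $i$th row and $q^b$ elsewhere, the vector $u_j$ is a simultaneous eigenvector for $x_1,\ldots,x_m$ with eigenvalue tuple
\[
\chi_j = (q^b,\ldots,q^b,\underbrace{q^a}_{j\text{th}},q^b,\ldots,q^b).
\]
Under the hypothesis $a\not\equiv b\pmod n$ one has $q^a\neq q^b$, so the characters $\chi_1,\ldots,\chi_m$ are pairwise distinct. Consequently, the simultaneous eigenspaces of $R$ on $V_{a,b}$ are exactly the lines $\mathbb{K}u_1,\ldots,\mathbb{K}u_m$, and every $R$-submodule of $V_{a,b}$ is of the form $\bigoplus_{j\in J}\mathbb{K}u_j$ for some $J\subseteq\{1,\ldots,m\}$.

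Next I would let $W\subseteq V_{a,b}$ be a nonzero $H$-submodule and show $W=V_{a,b}$. By the previous paragraph, $W=\bigoplus_{j\in J}\mathbb{K}u_j$ for some nonempty $J$. From the $2\times 2$ block of $Z_k$ we read off $Z_k\cdot u_k=q^{ab}u_{k+1}$ and $Z_k\cdot u_{k+1}=u_k$, both of which are nonzero. Hence if $k\in J$ then $k+1\in J$ (applying $z_k$), and if $k+1\in J$ then $k\in J$ (applying $z_k$ again). Since $\{s_1,\ldots,s_{m-1}\}$ acts transitively on $\{1,\ldots,m\}$, starting from any $j\in J$ we obtain $J=\{1,\ldots,m\}$, so $W=V_{a,b}$.

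The only substantive point is the separation of characters of $R$, which is precisely where the hypothesis $a\not\equiv b\pmod n$ enters; everything else reduces to reading off the action of the matrices $X_i$ and $Z_k$ on the basis $u_1,\ldots,u_m$. No obstacle is anticipated.
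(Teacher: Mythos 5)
Your proof is correct and follows essentially the same route as the paper: both arguments use that the hypothesis $a\not\equiv b\pmod n$ makes the $R$-characters of the lines $\K u_1,\ldots,\K u_m$ pairwise distinct (the paper isolates $u_k$ from an arbitrary nonzero vector by applying $x_k-q^b$, which is just the projection implicit in your isotypic decomposition), and then both use the $Z_k$'s to move between adjacent lines and conclude by transitivity.
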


\begin{proof}
Let $H=H_{n,m}$ and assume  $a\neq b\:(\mathrm{mod}\: n)$. 
Let $v=\sum_{i=1}^m \lambda_i u_i$ be a non-zero element of $v$ with $\lambda_k \neq 0$. Then $\lambda_k^{-1}(x_k-q^b)\cdot v = (q^a-q^b)u_k \neq 0$. Thus,   $u_k\in H\cdot v$. However, since $\Sigma_m$ acts transitively on $\{1,\ldots, m\}$, any basis element $u_j$ of $V_{a,b}$ can be obtained by multiplying $u_k$ by a suitable product of $Z_i's$. Hence $V_{a,b}$ is simple.
\end{proof}

\begin{lemma}
Let $m\geq 3$ and  $a\neq b \:(\mathrm{mod}\: n)$ and $a'\neq b' \:(\mathrm{mod}\: n)$. Then
 $V_{a,b} \simeq V_{a',b'}$ as left $H_{n,m}$-module if and only if  $(a,b) = (a',b')$ in $\Z_n^2$.
\end{lemma}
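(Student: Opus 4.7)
The ``if'' direction is immediate: if $(a,b)=(a',b')$ in $\mathbb{Z}_n^2$ then $q^a=q^{a'}$ and $q^b=q^{b'}$, so the matrices $X_i$ and $Z_k$ defining the two actions are literally the same and the identity of $\K^m$ is an $H_{n,m}$-isomorphism $V_{a,b}\to V_{a',b'}$.

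For the ``only if'' direction, my plan is to recover the pair $(a,b)$ from the action of the group-like generators $x_i$ alone; the $z_k$ are not needed. An $H_{n,m}$-module isomorphism $\phi:V_{a,b}\to V_{a',b'}$ must intertwine every $x_i$, so for each $i$ the diagonal matrix $X_i$ on $V_{a,b}$ is conjugate to the diagonal matrix $X_i$ on $V_{a',b'}$ and, in particular, the two matrices have the same eigenvalue multiset. The key observation, read off from the definition, is that on $V_{a,b}$ the matrix $X_i$ has the asymmetric spectrum: eigenvalue $q^a$ with multiplicity $1$ and eigenvalue $q^b$ with multiplicity $m-1$.

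The finishing step combines both standing hypotheses. Because $a\not\equiv b\pmod{n}$ and $q$ is a primitive $n$th root of unity, $q^a\neq q^b$ and similarly $q^{a'}\neq q^{b'}$. Because $m\geq 3$, one has $m-1\geq 2$, so the two multiplicities $1$ and $m-1$ are distinct; hence it makes unambiguous sense to speak of ``the'' eigenvalue of multiplicity $1$ and ``the'' eigenvalue of multiplicity $m-1$. Matching them across $V_{a,b}$ and $V_{a',b'}$ forces $q^a=q^{a'}$ and $q^b=q^{b'}$, giving $a\equiv a'\pmod{n}$ and $b\equiv b'\pmod{n}$, i.e.~$(a,b)=(a',b')$ in $\mathbb{Z}_n^2$.

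I do not expect any substantive obstacle; the entire content of the argument is this spectral separation, and the hypothesis $m\geq 3$ is invoked exactly once, to ensure that the multiplicities $1$ and $m-1$ can be told apart. The case $m=2$ (not covered by the lemma) genuinely fails in this form, since both eigenvalues of $X_i$ then occur with multiplicity $1$ and distinguishing $(a,b)$ from $(b,a)$ would require extra input from the action of $z_k$.
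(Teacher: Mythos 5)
Your argument is correct, and it reaches the conclusion by a genuinely different (and shorter) route than the paper. The paper fixes an isomorphism $f:V_{a,b}\to V_{a',b'}$ with matrix $(\lambda_{ij})$ and first proves that this matrix is diagonal: assuming some off-diagonal $\lambda_{ij}\neq 0$, it picks a third index $l\neq i,j$ (this is where $m\geq 3$ enters there) and compares coefficients in $x_j\cdot f(u_i)$ and $x_l\cdot f(u_i)$ to force $b\equiv a'$ and $b\equiv b'$, contradicting $a'\not\equiv b'$; only then does it read off $a\equiv a'$ and $b\equiv b'$ from the diagonal entries. You bypass the analysis of the intertwiner entirely by observing that conjugate matrices have the same eigenvalue multiset, that $X_i$ on $V_{a,b}$ has spectrum $\{q^a$ with multiplicity $1$, $q^b$ with multiplicity $m-1\}$ (the two eigenvalues being distinct precisely because $a\not\equiv b \pmod n$ and $q$ is a primitive $n$th root of unity), and that for $m\geq 3$ the multiplicities $1$ and $m-1$ are distinguishable. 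Both proofs use only the group-likes $x_i$ and both invoke $m\geq 3$ exactly once, but yours is more conceptual and avoids the case analysis; the paper's version has the mild side benefit of showing explicitly that any isomorphism is given by a diagonal matrix. Your closing remark about $m=2$ correctly identifies why the spectral argument alone cannot separate $V_{a,b}$ from $V_{b,a}$ in that case. One cosmetic point: the ``if'' direction is clean here because the paper fixes representatives $0\leq a,b<n$, so equality in $\Z_n^2$ really does make the defining matrices (including the entries $p^{b^2}$ and $q^{ab}$ of $Z_k$, which a priori depend on the integer representative of $b$) literally identical; it is worth being aware that this normalization is doing a little work.
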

\begin{proof}
Suppose $f: V_{a,b} \to V_{a',b'}$ is an isomorphism of $H_{n,m}$-modules and $(\lambda_{ij})_{1\leq i,j\leq m}$ is a scalar matrix such that $f(u_i)=\sum_{j=1}^m \lambda_{ij} u_j'$, where $\{u_1,\ldots, u_m\}$ respectively $\{u_1', \ldots, u_m'\}$ is a basis for $V_{a,b}$ respectively $V_{a',b'}$. We will show that $(\lambda_{ij})$ is a diagonal matrix. Suppose there exists $i\neq j$ with $\lambda_{ij}\neq 0$. Let $l\neq i,j$ (using $m\geq 3$) and consider the following equations:
\begin{eqnarray}\label{eq:action}
\sum_{r=1}^m \lambda_{ir}q^b u_r' = q^b f(u_i) &=& x_j\cdot  f(u_i) = \lambda_{ij} q^{a'} u_j' +  \sum_{r\neq j}^m \lambda_{ir}q^{b'} u_r' \\
\sum_{r=1}^m \lambda_{ir}q^b u_r' = q^b f(u_i) &=& x_l\cdot f(u_i) = \lambda_{il} q^{a'} u_l' +  \sum_{r\neq l}^m \lambda_{ir}q^{b'} u_r'.
\end{eqnarray}
Comparing the coefficients of $u_j'$ on  both sides of each equation, we conclude 
 $b=a' \:(\mathrm{mod}\: n)$ and   $b=b' \:(\mathrm{mod}\: n)$, which contradicts the assumption $a'\neq b' \:(\mathrm{mod}\: n)$.  Hence $f(u_i)=\lambda_{ii} u_i'$ and $\lambda_{ii}\neq 0$, for all $i$. Thus 
 $\lambda_{ii}q^au_i' = q^af(u_i)=x_i \cdot f(u_i)=\lambda_{ii}q^{a'}u_i'$ and 
$ \lambda_{ii}q^bu_i' = q^bf(u_i)=x_j \cdot f(u_i)=\lambda_{ii}q^{b'}u_i'$, for any $j\neq i$ shows $a=a'$ and $b=b'$ modulo $n$.
The converse statement holds, since  the map $f:V_{a,b}\to V_{a',b'}$ with $f(u_i)=u_i'$ is an isomorphism of $H_{n,m}$-modules, in case $(a,b)=(a',b')$ in $\Z_n^2$.
 \end{proof}

Denote by $M_{m,a,b}\in M_{m}(\Z)$ the $m\times m$-matrix, such that the diagonal entries are all equal to $a$ and all other entries are equal to $b$. For example for $m=2$ and $m=3$ we have the matrices
\begin{equation}
M_{2,a,b} = \left[\begin{array}{cc} a & b \\ b & a \end{array}\right], \quad 
M_{3,a,b} = \left[\begin{array}{ccc} a & b& b \\ b & a &b \\ b & b& a \end{array}\right], \quad
\end{equation}
The determinant of such matrices are  $\det(M_{2,a,b}) = a^2-b^2$ and $\det(M_{3,a,b}) = a^3+2b^3-3ab^2$.
More generally, if $e_i$ denotes the canonical basis of $\K^m$, then $v_1=\sum_{i=1}^m e_i$ is an eigenvector of $M_{m,a,b}$ with eigenvalue $a+(m-1)b$ and the vectors $v_i := e_{i-1}-e_{i}$, for $2\leq i \leq n$ are eigenvectors with eigenvalue $a-b$. Hence,
\begin{equation}
\det(M_{m,a,b})  = (a+(m-1)b)(a-b)^{m-1}.
\end{equation}

 An $H$-module $V$ is inner-faithful if the only Hopf ideal that annihilates $V$ is the zero ideal. 

\begin{lemma}\label{lem:inner-faithful}  $V_{a,b}$ is an inner-faithful $R$-module if $\gcd((a+(m-1)b)(a-b)^{m-1}, n)=1$.
\end{lemma}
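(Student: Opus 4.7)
The plan is to reduce the Hopf-theoretic notion of inner faithfulness for the commutative cocommutative group algebra $R=\K\Z_n^{\otimes m}\simeq \K[\Z_n^m]$ to ordinary faithfulness of the underlying group representation. Hopf ideals of $R$ correspond bijectively to subgroups $N\leq \Z_n^m$ via $N\mapsto I_N = \langle g-1 : g\in N\rangle$, with quotient $R/I_N \simeq \K[\Z_n^m/N]$. An $R$-module factors through $R/I_N$ precisely when $N$ acts trivially, so $V_{a,b}$ is inner-faithful over $R$ if and only if the group $\Z_n^m$ acts faithfully on $V_{a,b}$.

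Next, I would identify the kernel of this group action explicitly. Each $x_i$ acts diagonally in the basis $u_1,\ldots,u_m$, so the group element $x_1^{i_1}\cdots x_m^{i_m}$ acts on $u_k$ as multiplication by the scalar $q^{ai_k + b\sum_{j\neq k}i_j}$. Hence $(i_1,\ldots,i_m)\in\Z_n^m$ lies in the kernel of the action if and only if $ai_k + b\sum_{j\neq k}i_j \equiv 0 \ (\mathrm{mod}\ n)$ for every $k=1,\ldots,m$, which is exactly the condition that the column vector $(i_1,\ldots,i_m)^T$ lies in the kernel of the $\Z_n$-linear endomorphism of $\Z_n^m$ given by the matrix $M_{m,a,b}$.

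To conclude, I would invoke the standard fact that a square integer matrix induces an injective (equivalently, bijective) endomorphism of $\Z_n^m$ if and only if its determinant is a unit modulo $n$, i.e.\ $\gcd(\det(M_{m,a,b}),n)=1$. Under this hypothesis the kernel is trivial, so $\Z_n^m$ acts faithfully on $V_{a,b}$, and inner faithfulness follows. The argument is essentially a linear algebra computation over $\Z_n$; the only conceptual input needed is the bijection between Hopf ideals of a finite abelian group algebra and its subgroups, and I do not anticipate a serious obstacle.
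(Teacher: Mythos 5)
Your proposal is correct and follows essentially the same route as the paper: both reduce to the classification of Hopf ideals of the group algebra $\K[\Z_n^m]$ by subgroups $N$, compute that a group element $x_1^{\alpha_1}\cdots x_m^{\alpha_m}$ acts on $u_k$ by $q^{a\alpha_k+b\sum_{j\neq k}\alpha_j}$, and conclude from the invertibility of $M_{m,a,b}$ modulo $n$ that the kernel subgroup is trivial. The only cosmetic difference is that you phrase the reduction as ``inner faithfulness equals faithfulness of the group action,'' whereas the paper argues directly with a single annihilating Hopf ideal.
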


\begin{proof}
Let $I$ be a Hopf ideal of $R$. Since $R$ is a group algebra, there exists a subgroup $N\subseteq G=\Z_n^m$ such that $I = \K G (\K N)^+$ (see \cite[Lemma 4]{Farnsteiner} or \cite[p.329]{Passman}). Suppose $I\cdot V_{a,b}=0$. Then
$\omega= 1-x_1^{\alpha_1} \cdots x_m^{\alpha_m}\ \in I$, for some  $x_1^{\alpha_1} \cdots x_m^{\alpha_m}\in N$.  For any $1\leq j \leq m$ we obtain an equation
$\omega\cdot u_j = 0$, i.e. $1 = q^{\alpha_j a + \sum_{l\neq j} \alpha_l b}$. Thus,  $\alpha_j a + \sum_{l\neq j} \alpha_l b = 0 \: (\mathrm{mod}\: n),$ or in matrix notation $\alpha M_{m,a,b}  = 0 \: (\mathrm{mod}\: n)$. If $\gcd(\det(M_{m,a,b}), n)=1$, then $M_{m,a,b}$ is invertible modulo $n$. Thus $\alpha_i=0$, for all $i$, which means that $N$ is trivial and so is $I$.
\end{proof}

Next we  show that $H_{n,m}$ acts on a quantum polynomial algebra $A$  in $m$ variables, such that $V_{a,b}$ is its degree one space.

\begin{theorem}\label{thm:qpoly}
Let $a\neq b \:(\mathrm{mod}\: n)$,  $q$ a primitive $n$th root of unity in $\K$ and a square root $p$ of $q$. Set $\lambda = q^{b^2-ab}$ and $\mu = p^{b^2-a^2}$ and let $(r_{ij})$ be the $m\times m$-matrix such that $r_{ii}=1$, $ r_{ij} = \lambda^{j-i-1} \mu$ for $i<j$ and $r_{ij}=r_{ji}^{-1}$ for $j<i$.  Consider the quantum polynomial algebra in $m$ variables $u_1,\ldots, u_m$  subject to
$u_i u_j = r_{ij} u_ju_i$, for all $i,j$.
\begin{equation}
	A_{a,b} = \frac{\K\langle u_1,\ldots, u_m\rangle}{\langle u_iu_j - r_{ij}u_ju_i :i,j\rangle}.
\end{equation} 
Then $A_{a,b}$ is a left $H$-module algebra for the semisimple Hopf algebra $H=\K\Z_n^{\otimes m}\#_\gamma \Sigma_m$, where $V_{a,b}$ determines the $H$-action on the generators $u_i$, i.e.
$$x_i\cdot u_i = q^a u_i, \qquad x_j\cdot u_i = q^b u_i, \quad \mbox{ for } i\neq j$$
$$z_i\cdot u_i = q^{ab} u_{i+1}, \qquad z_i\cdot u_{i+1} = u_i, \qquad z_i \cdot u_j = p^{b^2} u_j, \qquad \mbox{ for } j \neq \{i,i+1\}.$$
\end{theorem}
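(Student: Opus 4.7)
My plan is to extend the $H$-action on $V_{a,b}$ established above to the tensor algebra $T(V_{a,b})$ via the coproduct, which by general principles makes $T(V_{a,b})$ an $H$-module algebra. I then reduce the theorem to showing that the ideal $I$ generated by the quantum relations $f_{ij} := u_iu_j - r_{ij}u_ju_i$ is $H$-stable, so that the quotient $A_{a,b}$ inherits the module-algebra structure. Since the set of $h\in H$ with $h\cdot I\subseteq I$ is closed under multiplication (because $I$ is an ideal and $H$ acts via the Leibniz rule), it will suffice to verify this on the algebra generators $x_1,\ldots,x_m,z_1,\ldots,z_{m-1}$ of $H$.

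For the group-like $x_k$ the action on $V_{a,b}$ is diagonal, so $x_k\cdot(u_iu_j)$ and $x_k\cdot(u_ju_i)$ differ by the same scalar factor from $u_iu_j$ and $u_ju_i$ respectively; hence $x_k\cdot f_{ij}$ is a scalar multiple of $f_{ij}$, and this case is immediate.

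The substantive step is $h=z_k$. Using $\Delta(z_k)=J_k(z_k\otimes z_k)$ with $J_k=\frac{1}{n}\sum_{s,t}q^{-st}x_k^s\otimes x_{k+1}^t$, the orthogonality relation $\frac{1}{n}\sum_{t}q^{t(\beta-s)}=\delta_{s,\beta}$ yields on basis tensors the clean formula $J_k\cdot(u_l\otimes u_{l'})=q^{\alpha_l\beta_{l'}}u_l\otimes u_{l'}$, where $\alpha_l=a$ iff $l=k$ (else $b$) and $\beta_{l'}=a$ iff $l'=k+1$ (else $b$). Writing $z_k\cdot u_l=c_lu_{s_k(l)}$ with $c_k=q^{ab}$, $c_{k+1}=1$ and $c_l=p^{b^2}$ for $l\notin\{k,k+1\}$, then reducing modulo $I$ via $u_{s_k(i)}u_{s_k(j)}=r_{s_k(i),s_k(j)}u_{s_k(j)}u_{s_k(i)}$ and cancelling the common factor $c_ic_j=c_jc_i$, the condition $z_k\cdot f_{ij}\in I$ will reduce to the scalar identity
\[
q^{\alpha_{s_k(i)}\beta_{s_k(j)}}\, r_{s_k(i),s_k(j)} \;=\; r_{ij}\, q^{\alpha_{s_k(j)}\beta_{s_k(i)}}.
\]

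Finally, I verify this identity by a finite case split on $\{i,j\}\cap\{k,k+1\}$. When the intersection is empty, $s_k$ fixes both indices and both sides reduce to $q^{b^2}r_{ij}$. When $\{i,j\}=\{k,k+1\}$ the identity becomes $r_{k,k+1}^2=q^{b^2-a^2}$, which is precisely $\mu^2=q^{b^2-a^2}$ and explains why the square root $p$ of $q$ is required. When exactly one of $i,j$ lies in $\{k,k+1\}$, the identity reduces to ratios of the form $r_{k,l}/r_{k+1,l}=\lambda=q^{b^2-ab}$ or its inverse, which follow from $r_{ij}=\lambda^{j-i-1}\mu$ for $i<j$ together with $r_{ji}=r_{ij}^{-1}$. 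The only genuine obstacle is the bookkeeping in this case analysis; the specific numerical choices $\lambda=q^{b^2-ab}$ and $\mu=p^{b^2-a^2}$ in the statement of the theorem are forced by exactly these identities.
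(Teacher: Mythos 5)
Your proposal is correct and follows essentially the same route as the paper: both reduce the module-algebra claim to checking that the $z_k$-action preserves the quantum relations via $\Delta(z_k)=J_k(z_k\otimes z_k)$, with the orthogonality of roots of unity collapsing $J_k$ to the scalar $q^{\alpha\beta}$ on basis tensors, and the choices $\mu^2=q^{b^2-a^2}$ and $r_{kj}/r_{k+1,j}=\lambda$ doing exactly the work you identify. Your single uniform identity $q^{\alpha_{s_k(i)}\beta_{s_k(j)}}r_{s_k(i),s_k(j)}=r_{ij}q^{\alpha_{s_k(j)}\beta_{s_k(i)}}$ is a clean repackaging of the case-by-case computations the paper carries out (explicitly for $(i,i+1)$ and $(i,j)$ with $j>i+1$, with the rest left as ``similar calculations'').
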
 

\begin{proof}
We claim that $A$ is a left $H$-module algebra. As before, $R=\K\Z_n^{\otimes m}$.  Since the group-likes $x_i$ act as scalars on $u_j$, we conclude that $A$ is an $R$-module algebra. Recall that 
\begin{equation}
	\Delta(z_i) = J_i (z_i \otimes z_i) = \frac{1}{n} \sum_{s,t} q^{-st} x_i^sz_i \otimes x_{i+1}^tz_i.
\end{equation}
Hence for $1\leq i< m$:
\begin{eqnarray*}
	z_i\cdot (u_iu_{i+1}) 
	&=&
	\frac{1}{n} \sum_{s,t} q^{-st} \left(x_i^sz_i\cdot u_i \right)\left( x_{i+1}^tz_i\cdot u_{i+1}\right)
	= q^{ab} q^{b^2} u_{i+1}u_i \\
	z_i\cdot (u_{i+1}u_i) &=&
	\frac{1}{n} \sum_{s,t} q^{-st} \left(x_i^sz_i\cdot u_{i+1} \right)\left( x_{i+1}^tz_i\cdot u_{i}\right)
	= q^{ab} q^{a^2} u_{i}u_{i+1} = q^{ab} q^{a^2} \mu u_{i+1}u_{i},
\end{eqnarray*}
where   $r_{i,i+1} = \mu=p^{b^2-a^2}$. Thus 
$z_i\cdot(u_iu_{i+1}) = q^{ab} q^{b^2}u_{i+1}u_i = q^{ab}\mu^2 q^{a^2} u_{i+1}u_i = z_i\cdot ( r_{i,i+1} u_{i+1}u_i).$

A similar calculation for $j>i+1$ shows 
\begin{eqnarray*}
z_i\cdot (u_iu_j) &=& p^{b^2} q^{ab+b^2} u_{i+1}u_j\\
z_i\cdot (u_ju_i) &=& p^{b^2} q^{2ab} u_{j}u_{i+1} = p^{b^2}q^{2ab} r_{j,i+1} u_{i+1}u_j
\end{eqnarray*}
Since $r_{ij}r_{j,i+1} = r_{ij}r_{i+1,j}^{-1} = \lambda = q^{b^2-ab}$ we obtain
\begin{equation}
	z_i\cdot(u_iu_j) 
	= p^{b^2}q^{ab+b^2} u_{i+1}u_j = p^{b^2} r_{ij}r_{j,i+1}q^{2ab}u_{i+1}u_j = 
	  z_i\cdot(r_{ij} u_{j}u_i).
\end{equation}
Similar calculations show $z_i \cdot (u_lu_k) = r_{lk}z_i\cdot(u_ku_l)$, for all $i,k,l$.
\end{proof}
 
 \begin{proposition}\label{prop:invariants}
Let  $H_{n,m} = R\#_\gamma \Sigma_m$ and $A_{a,b}$ as above, for some $a\neq b \:(\mathrm{mod}\: n)$.  If  $\gcd(\det(M_{m,a,b}), n)=1$ and $n$ is even, then $A_{a,b}^{H_{n,m}}$ is contained in the commutative polynomial ring $A_{a,b}^{R}=\K[u_1^n, \ldots, u_m^n]$.
 \end{proposition}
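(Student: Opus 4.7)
The plan is to reduce $H_{n,m}$-invariance to $R$-invariance, then compute $A_{a,b}^R$ directly. Since $R$ embeds as a Hopf subalgebra of $H_{n,m}$ (Theorem \ref{thm:HopfAlgebra}), every $H_{n,m}$-invariant is automatically $R$-invariant, giving the inclusion $A_{a,b}^{H_{n,m}}\subseteq A_{a,b}^R$ for free. The actual content of the statement is therefore the identification $A_{a,b}^R=\K[u_1^n,\ldots,u_m^n]$, together with the commutativity of the right-hand side, and this is where the coprimality and parity hypotheses enter.

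To compute $A_{a,b}^R$ I would use the PBW basis of the quantum polynomial ring: the ordered monomials $u_1^{c_1}\cdots u_m^{c_m}$ form a $\K$-basis of $A_{a,b}$. Since each $x_j$ is group-like and acts on $u_i$ by the scalar from Theorem \ref{thm:qpoly}, such monomials are simultaneous $R$-eigenvectors with
\[
x_j\cdot u_1^{c_1}\cdots u_m^{c_m}=q^{ac_j+b\sum_{l\neq j}c_l}\,u_1^{c_1}\cdots u_m^{c_m}.
\]
Collecting the triviality conditions for all $j$ yields $M_{m,a,b}\,c\equiv 0\pmod n$ with $c=(c_1,\ldots,c_m)^T$. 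The hypothesis $\gcd(\det(M_{m,a,b}),n)=1$ makes $M_{m,a,b}$ invertible in $M_m(\Z/n\Z)$, forcing every $c_j$ to lie in $n\Z$. Hence $A_{a,b}^R$ has $\K$-basis $\{u_1^{nc_1}\cdots u_m^{nc_m}\}$.

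It remains to check that these generators commute, i.e. that $\K\langle u_1^n,\ldots,u_m^n\rangle$ is the ordinary polynomial ring $\K[u_1^n,\ldots,u_m^n]$. From $u_iu_j=r_{ij}u_ju_i$ one gets inductively $u_i^n u_j^n = r_{ij}^{n^2} u_j^n u_i^n$. Writing $r_{ij}=\lambda^{j-i-1}\mu$ with $\lambda=q^{b^2-ab}$ and $\mu=p^{b^2-a^2}$, one computes $\lambda^n=1$ and $\mu^{2n}=q^{n(b^2-a^2)}=1$, so $r_{ij}^{n^2}=\mu^{n^2}$. When $n$ is even, $n^2$ is a multiple of $2n$, hence $\mu^{n^2}=1$ and $r_{ij}^{n^2}=1$. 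Thus the $u_i^n$ commute pairwise and the identification of $A_{a,b}^R$ with the commutative polynomial ring is complete.

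There is no real obstacle: the statement is essentially an exercise in tracking the $R$-weights of monomials and a parity computation. The only subtle point is that the hypothesis ``$n$ even'' is exactly what is needed to kill $\mu^{n^2}$; were $n$ odd, $\mu$ would be a genuine $2n$-th but not $n$-th root of unity, and the $u_i^n$ would only $q$-commute, so $A_{a,b}^R$ itself would be a nontrivial quantum polynomial subring.
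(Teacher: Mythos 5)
Your argument is correct and follows essentially the same route as the paper's proof: the trivial inclusion $A^{H}\subseteq A^{R}$, the weight computation on monomials giving $\alpha M_{m,a,b}\equiv 0 \pmod n$ and hence $n\mid \alpha_i$ by invertibility of $M_{m,a,b}$ modulo $n$, and the parity argument showing $r_{ij}^{n^2}=1$ so that the $u_i^n$ commute. The only difference is cosmetic (you separate the $\lambda$ and $\mu$ contributions where the paper computes $r_{ij}^{n^2}$ directly), and your closing aside about odd $n$ is a heuristic that depends on the choice of the square root $p$, but it plays no role in the proof.
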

 \newcommand{\N}{\mathbb{N}}
 \begin{proof} Set $A=A_{a,b}$.
 Clearly, $A^H\subseteq A^R$.  Let $u_1, \ldots, u_m$ be the algebra generators of $A$. The monomials $u^\alpha := u_1^{\alpha_1}\cdots u_m^{\alpha_m}$, with $\alpha = (\alpha_1,\ldots, \alpha_m)\in \N^m$, form a $\K$-basis for $A$.   Let $w = \sum a_{\alpha} u^\alpha \in A^R$, with only finitely many non-zero $a_{\alpha}\in \K$. Denote by $\sup(w) = \{ \alpha : a_\alpha\neq 0\}$ the support of $w$. For any $1\leq i \leq m$ we have 
 $$w = x_i \cdot w = \sum a_{\alpha} q^{\alpha_i a + \left(\sum_{j\neq i}\alpha_j\right)b} u^\alpha.$$ Hence 
 $\alpha_i a +  \left(\sum_{j\neq i}\alpha_j\right)b = 0 \:(\mathrm{mod}\: n)$, for any $\alpha \in \sup(w)$ and for any $i$, which means that $\alpha M_{m,a,b} = 0 \:(\mathrm{mod}\: n)$. Since $\gcd(\det(M_{m,a,b}),n)=1$, $M_{m,a,b}$ is invertible modulo $n$ and $\alpha = (0,\ldots, 0) \:(\mathrm{mod}\: n)$, i.e. $n\mid \alpha_i$, for all $i$. Note that if $n$ is even, then $r_{ij}^{n^2} = q^{n^2(b^2-ab)} p^{n^2(b^2-a^2)} = (-1)^{n(b^2-a^2)}=1$. Hence $u_i^n u_j^n=u_j^nu_i^n$ and  $A^R \subseteq \K[u_1^n, \ldots, u_m^n]$. The converse inclusion is clear.
 \end{proof}

The condition $\gcd(\det(M_{m,a,b}), n)=1$ is obviously satisfied, when $a=1$ and $b=0$. Then $\lambda = q^{b^2-ab}=1$ and  $\mu = p^{b^2-a^2}=p^{-1}$. Thus the matrix $(r_{ij})$ satisfies $r_{ji}= p$ for $i<j$.

\begin{theorem}\label{thm:action} Let $n,m\geq 2$, $H=R\#_\gamma \Sigma_m$ and $R=\K\Z_n^{\otimes m}$. Let $q$ be the primitive $n$th root of unity used in the definition of the twist $J$ and assume there exists a square root $p$ of $q$. Then 
$$A = \frac{\K\langle u_1,\ldots, u_m\rangle}{\langle u_ju_i - pu_iu_j : i<j\rangle}$$
 is a left $H$-module algebra with action of the generators $x_i$ and $z_i$ of $H$ given by
$$x_iu_i=qu_i, \quad x_iu_j = u_j \: \mbox{ for $j\neq i$}, \quad z_i u_i = u_{i+1}, \quad z_i u_{i+1}=u_i, \quad z_iu_j=u_j \mbox{ for $j\neq i, i+1$}.$$

Moreover, $A$ is an inner-faithful $R$-module and if $n$ is even then $A^H\subseteq \K[u_1^n, \ldots, u_m^n]$.
\end{theorem}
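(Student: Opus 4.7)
The plan is to identify this theorem as the specialization $a=1$, $b=0$ of the three earlier results Theorem~\ref{thm:qpoly}, Lemma~\ref{lem:inner-faithful}, and Proposition~\ref{prop:invariants}, and then to verify that the hypotheses of each result are satisfied in this particular case. Since $n\geq 2$, the condition $a\neq b\pmod n$ holds trivially.

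First I would verify that the algebra $A$ defined in the statement coincides with $A_{1,0}$ from Theorem~\ref{thm:qpoly}. Substituting $a=1$, $b=0$ in $\lambda = q^{b^2-ab}$ and $\mu = p^{b^2-a^2}$ yields $\lambda = 1$ and $\mu = p^{-1}$, so the coefficients reduce to $r_{ij}=p^{-1}$ for $i<j$. The defining relations $u_iu_j = r_{ij}u_ju_i$ rearrange exactly to $u_ju_i = p\,u_iu_j$ for $i<j$, matching the presentation given in the theorem. With the hypotheses checked, Theorem~\ref{thm:qpoly} immediately produces the left $H$-module algebra structure on $A=A_{1,0}$. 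Substituting $a=1$, $b=0$ into the action formulas of that theorem gives $q^b=1$, $q^{ab}=1$, and $p^{b^2}=1$, which produces precisely the prescribed action of the generators $x_i$ and $z_i$ on $u_1,\ldots,u_m$.

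For the inner-faithfulness of $A$ as an $R$-module, I would note that the $R$-submodule of $A$ spanned by $u_1,\ldots,u_m$ is exactly $V_{1,0}$, so any Hopf ideal $I\subseteq R$ annihilating $A$ also annihilates $V_{1,0}$. Now $M_{m,1,0}$ is the identity matrix, hence $\det(M_{m,1,0})=1$, so the coprimality hypothesis $\gcd(\det M_{m,1,0}, n)=1$ in Lemma~\ref{lem:inner-faithful} is automatic; the lemma forces $I=0$, establishing inner-faithfulness of $A$ over $R$. The remaining claim $A^H\subseteq \K[u_1^n,\ldots,u_m^n]$ when $n$ is even follows by applying Proposition~\ref{prop:invariants} with the same parameters, where the same gcd condition is in force.

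I do not anticipate any substantial obstacle: the real work has been carried out in Theorem~\ref{thm:qpoly}, Lemma~\ref{lem:inner-faithful}, and Proposition~\ref{prop:invariants}, and the proof amounts to checking that the numerology at $a=1$, $b=0$ reproduces exactly the algebra $A$ and the action stated. The only sanity check worth highlighting is that $V_{1,0}$ sits inside $A$ as an $R$-stable subspace so that inner-faithfulness passes from $V_{1,0}$ up to $A$, which is immediate from the degree-one grading.
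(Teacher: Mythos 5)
Your proposal is correct and follows exactly the paper's own proof, which simply combines Lemma~\ref{lem:inner-faithful}, Theorem~\ref{thm:qpoly} and Proposition~\ref{prop:invariants} at $a=1$, $b=0$; your verification that $\lambda=1$, $\mu=p^{-1}$, $\det(M_{m,1,0})=1$ and that the degree-one part of $A$ is $V_{1,0}$ as an $R$-module matches the remarks the paper makes just before the theorem. The only difference is that you spell out the (correct and immediate) step that inner-faithfulness of $V_{1,0}$ over $R$ passes to $A$, which the paper leaves implicit.
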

\begin{proof}
Combine Lemma \ref{lem:inner-faithful}, Theorem \ref{thm:qpoly} and Proposition \ref{prop:invariants}.
\end{proof}

In \cite{Ferraro}, Ferraro et al. showed that Pansera's Hopf algebras $H_{n,2}$ act inner-faithfully on an Artin-Schelter regular algebra $A$, such that its ring of invariants is again Artin-Schelter regular - an analogous property to the Chevalley-Shephard-Todd Theorem for group actions on polynomial algebras. The authors of \cite{Ferraro} termed such Hopf algebras ``reflection Hopf algebra'' for $A$.

 \begin{exa}\label{ex:cyclicHopf}
  Like in Example \ref{exa:cyclic}, consider the Hopf subalgebra $H=R\#_\gamma \langle s\rangle$ of $H_{n,m}$ generated by $R$ and $\theta=z_1z_2\cdots z_{m-1}$, where $s=(12\cdots m)$. Then $H$ has dimension $mn^m$ and is generated as algebra by $x_1,\cdots, x_m$ and $\theta$. By Theorem \ref{thm:action}, $H_{n,m}$ and hence $H$ acts on 
$$A =A_{1,0}= \frac{\K\langle u_1,\ldots, u_m\rangle}{\langle u_ju_i - pu_iu_j : i<j\rangle},$$
where $\theta$ acts cyclically on the generators of $A$, i.e. $\theta \cdot u_j =u_{s(j)}$, for all $j$.
We claim that $A$ is an inner-faithful $H$-module algebra. Suppose  $I$ is a Hopf ideal of $H$  with $I\cdot A = 0$ and let $\omega = \sum_{i=0}^{m-1} a_i \theta^i  \in I$, with $a_i \in R$. Then  
$\omega \cdot u_j = \sum_{i=0}^{m-1} a_i \cdot u_{s(j)} = 0$, for all $1\leq j \leq m$.  Since the spaces $\K u_j$ are stable under the action of $R$, we get $a_i \cdot u_j = 0$, for all $i$ and  $j$. Thus $a_i  \in I\cap R$, for all $i$. As $I$ is also a Hopf ideal of $R$ we conclude by Lemma \ref{lem:inner-faithful}, $I\cap R=0$. Hence $\omega = 0$, which means that $H$ acts inner faithfully on $A$.

 If $n$ is even, then by Theorem  \ref{thm:action}, $A^H\subseteq \K[u_1^n, \ldots, u_m^n]$.  Since $\theta$ acts cyclically on the generators $u_i$, it also acts cyclically on the elements $u_i^n$. Hence $A^H$ consists of the cyclic polynomials in $u_i^n$,  i.e. $A^H = \K[u_1^n, \ldots, u_m^n]^{\langle s \rangle}$. 

 Note that the quantum polynomial algebra $A=A_{1,0}$ is an Artin-Schelter regular ring.
In case $m=2$, $H=H_{n,2}$ and $A^H=\K[u_1^n, u_2^n]^{\langle (12) \rangle} = \K[u_1^n+u_2^n, u_1^nu_2^n]$ is a polynomial algebra, hence Artin-Schelter regular and $H$ is a reflection Hopf algebra for $A$. In case $m\geq 3$, the ring of invariants  $A^H = \K[u_1^n, \ldots, u_m^n]^{\langle s \rangle}$ is not a polynomial ring and  $H$ is not a reflection Hopf algebra for $A$ in the sense of \cite{Ferraro}. 
 \end{exa}

\section*{Acknowledgment}
I  dedicate this paper to the cherished memory of my father. Parts of this research were conducted in 2024 during a visit to the Technical University of Dresden, supported by a DAAD grant. I am especially grateful to Uli Krähmer and his team - Zbigniew Wojciechowski, Ivan Bartulović, and Tony Zorman - for their warm hospitality and for making my stay in Dresden both productive and enjoyable. My thanks goes also to the anonymous referee for her/his valuable comments. Furthermore, I was  partially supported by CMUP, member of LASI, which is financed by national funds through FCT - Fundação para a Ciência e a Tecnologia, I.P., under the projects with reference UIDB/00144/2020 and UIDP/00144/2020.


	\begin{bibdiv}
		\begin{biblist}
	\bib{ChenYangWang}{article}{
 author={Chen, Jialei},
 author={Yang, Shilin},
 author={Wang, Dingguo},
 issn={1673-3452},
 issn={1673-3576},
 doi={10.1007/s11464-021-0893-x},
 title={Grothendieck rings of a class of Hopf algebras of Kac-Paljutkin type},
 journal={Frontiers of Mathematics in China},
 volume={16},
 number={1},
 pages={29--47},
 date={2021},
 publisher={Springer, Berlin/Heidelberg; Higher Education Press, Beijing},
}	
		
		\bib{CuadraEtingofWalton}{article}{
   author={Cuadra, Juan},
   author={Etingof, Pavel},
   author={Walton, Chelsea},
   title={Semisimple Hopf actions on Weyl algebras},
   journal={Adv. Math.},
   volume={282},
   date={2015},
   pages={47--55},
   issn={0001-8708},
   doi={10.1016/j.aim.2015.05.014},
}

\bib{Davydov}{article}{
 author={Davydov, A.},
 isbn={978-90-6569-061-6},
 book={
 title={Noncommutative structures in mathematics and physics. Proceedings of the congress, Brussels, Belgium, July 22--26, 2008. Contactforum},
 publisher={Brussel: Koninklijke Vlaamse Academie van Belgie voor Wetenschappen en Kunsten},
 },
 title={Twisted automorphisms of Hopf algebras.},
 pages={103--130},
 date={2010},
}

\bib{EtingofWalton}{article}{
   author={Etingof, Pavel},
   author={Walton, Chelsea},
   title={Semisimple Hopf actions on commutative domains},
   journal={Adv. Math.},
   volume={251},
   date={2014},
   pages={47--61},
   issn={0001-8708},
   doi={10.1016/j.aim.2013.10.008},
}

\bib{Farnsteiner}{article}{
author={Farnsteiner, Rolf},
title={Burnside’s theorem for Hopf algebras.},
 note={Lecture notes, available at
\url{https://www.math.uni-bielefeld.de/ sek/select/rf5.pdf}}
 }

		\bib{Ferraro}{article}{
   author={Ferraro, Luigi},
   author={Kirkman, Ellen},
   author={Moore, W. Frank},
   author={Won, Robert},
   title={Three infinite families of reflection Hopf algebras},
   journal={J. Pure Appl. Algebra},
   volume={224},
   date={2020},
   number={8},
   pages={106315, 34},
   issn={0022-4049},
   doi={10.1016/j.jpaa.2020.106315},
}

\bib{KacPaljutkin}{article}{
   author={Kac, G. I.},
   author={Paljutkin, V. G.},
   title={Finite ring groups},
   language={Russian},
   journal={Trudy Moskov. Mat. Ob\v s\v c.},
   volume={15},
   date={1966},
   pages={224--261},
   issn={0134-8663}
}

\bib{Sebastian}{misc}{
 author={Sebastian Halbig},
 author={Christian Lomp},
 review={arXiv:2511.11047},
 state={submitted},
 title={Irreducible representations of generalised Kac-Paljutkin Hopf algebras},
 date={2025},
}

\bib{LompPansera}{article}{
   author={Lomp, Christian},
   author={Pansera, Deividi},
   title={A note on a paper by Cuadra, Etingof and Walton},
   journal={Comm. Algebra},
   volume={45},
   date={2017},
   number={8},
   pages={3402--3409},
   issn={0092-7872},
   doi={10.1080/00927872.2016.1236933},
}

\bib{Masuoka}{article}{
 author={Masuoka, Akira},
 issn={0021-2172},
 issn={1565-8511},
 doi={10.1007/BF02762089},
 title={Semisimple Hopf algebras of dimension {{\(6\)}}, {{\(8\)}}},
 journal={Israel Journal of Mathematics},
 volume={92},
 number={1-3},
 pages={361--373},
 date={1995},
 publisher={Springer, Berlin/Heidelberg; Hebrew University Magnes Press, Jerusalem},
}

\bib{Masuoka1997}{article}{
 author={Masuoka, Akira},
 issn={0021-8693},
 issn={1090-266X},
 doi={10.1006/jabr.1996.6863},
 review={Zbl 0877.16019},
 title={Calculations of some groups of Hopf algebra extensions},
 journal={Journal of Algebra},
 volume={191},
 number={2},
 pages={568--588, art. no. ja966863},
 date={1997},
 publisher={Elsevier (Academic Press), San Diego, CA},
}

\bib{Masuoka2002}{article}{
 author={Masuoka, Akira},
 isbn={0-521-81512-6},
 book={
 title={New directions in Hopf algebras},
 publisher={Cambridge: Cambridge University Press},
 },
 review={Zbl 1011.16024},
 title={Hopf algebra extensions and cohomology},
 pages={167--209},
 date={2002},
 eprint={www.msri.org/communications/books/Book43/},
}

\bib{PanseraPhD}{book}{
   author={Pansera, Deividi Ricardo},
   title={On Semisimple Hopf Actions},
   note={Thesis (Ph.D.)},
   publisher={Universidade do Porto},
   date={2017},
   pages={103}
}
\bib{Pansera}{article}{
   author={Pansera, Deividi},
   title={A class of semisimple Hopf algebras acting on quantum polynomial
   algebras},
   conference={
      title={Rings, modules and codes},
   },
   book={
      series={Contemp. Math.},
      volume={727},
      publisher={Amer. Math. Soc., [Providence], RI},
   },
   isbn={978-1-4704-4104-3},
   date={[2019] \copyright 2019},
   pages={303--316},
   doi={10.1090/conm/727/14643},
}
\bib{Passman}{article}{
   author={Passman, D. S.},
   author={Quinn, Declan},
   title={Burnside's theorem for Hopf algebras},
   journal={Proc. Amer. Math. Soc.},
   volume={123},
   date={1995},
   number={2},
   pages={327--333},
   issn={0002-9939},
   doi={10.2307/2160884},
}

\end{biblist}
\end{bibdiv}

\end{document}